\documentclass{amsart}    
\usepackage{graphicx}
\usepackage{hyperref}
\usepackage{amssymb}
\usepackage{setspace}

\title{The Non-triviality of the Grope Filtrations of The Knot and Link Concordance Groups}

%
\author{Peter D Horn}
\address{Department of Mathematics\\Rice University\\6100 S. Main Street\\Houston, TX 77005 }
\email{phorn@rice.edu}
\urladdr{http://math.rice.edu/~phorn}


\newtheorem{thm}{Theorem}[section]    
\newtheorem{lem}[thm]{Lemma}          

\newtheorem{prop}[thm]{Proposition}
\newtheorem*{mainresult}{Theorem~\ref{mainresult}}
\newtheorem*{secondaryresult}{Theorem~\ref{secondaryresult}}

\theoremstyle{definition}
\newtheorem{defn}[thm]{Definition}    

\newtheorem{rem}{Remark}             
%
%

\newcommand{\p}{\pi_1}
\newcommand{\del}{\partial}
\renewcommand{\a}{\alpha}
\renewcommand{\b}{\beta}
\newcommand{\lk}{{\mathrm{lk}}}
\newcommand{\Gn}[1]{G^{(#1)}}

\renewcommand{\H}[2]{H_{#1}\left({#2}\right)}
\newcommand{\pn}[2]{\p\left({#1}\right)^{({#2})}}
\newcommand{\F}{{\mathcal{F}}}
\newcommand{\C}{{\mathcal{C}}}
\newcommand{\R}{{\mathbb{R}}}
\newcommand{\Z}{{\mathbb{Z}}}
\newcommand{\Q}{{\mathbb{Q}}}
\newcommand{\G}{{\mathcal{G}}}
\newcommand{\N}{{\mathbb{N}}}
\newcommand{\BG}{{\mathcal{BG}}}
\newcommand{\BF}{{\mathcal{BF}}}
\newcommand{\B}{{\mathcal{B}}}

\leftmargin=.5in \rightmargin=.0in \textwidth=6.25in \oddsidemargin .25in
\evensidemargin .25in \textheight=8.5in \topmargin=0in

\begin{document}

\maketitle

\begin{abstract}    
	We consider the Grope filtration of the classical knot concordance group that was introduced in a paper of Cochran, Orr and Teichner.  Our main result is that successive quotients at each stage in this filtration have infinite rank.  We also establish the analogous result for the Grope filtration of the concordance group of string links consisting of more than one component.
\end{abstract}

\section{Introduction}

	We work in the smooth category.  A knot is a smooth  embedding of the circle in the three-sphere.  Two knots are isotopic if there is a smooth isotopy of the three-sphere  taking one knot the the other.  Isotopy is a three-dimensional notion of knot equivalence.  In this paper, we study a four-dimensional notion of knot equivalence called \emph{concordance}.  Two knots $K_{0}$ and $K_{1}$ are concordant if there exists a smooth embedding of an annulus $A=S^{1}\times[0,1]$ into $S^{3}\times[0,1]$ such that $A\cap\left(S^{3}\times\{i\}\right)=K_{i}$ for $i=0,1$.  One can check that isotopic knots are concordant, and one can find counterexamples to the converse.  Given two knots, one can combine them via the connect sum operation, denoted $\#$.  See~\cite[p 39]{dR76} for more details.  The set of knots modulo concordance, equipped with the connect sum operation, forms an abelian group.  We wish to study this \emph{concordance group} $\C$.

	In~\cite{COT03}, Cochran, Orr, and Teichner introduced two filtrations of the knot concordance group by subgroups.  The $(n)$-solvable filtration $$\cdots\subset \F_{n.5}\subset\F_{n}\subset\cdots\subset\F_{1}\subset\F_{0.5}\subset\F_{0}\subset\C$$ is defined loosely by $K\in\F_{n}$ if $0$-surgery on $K$ bounds a $4$-manifold whose first and second  homology groups satisfy certain conditions related to the $n$-th derived subgroup of that $4$-manifold.  The precise definition will be provided in Section~\ref{definitions}.  It is true that a slice knot lies in $\F_{n}$ for all $n\in\frac{1}{2}\N$.  In other words, this filtration measures how far \emph{algebraically} a knot is from being slice (take, for example, the highest $n$ for which $K\in\F_{n}$ but $K\not\in\F_{n.5}$).
	
	The other filtration defined in~\cite{COT03} is the Grope filtration $$\cdots\subset \G_{n.5}\subset\G_{n}\subset\cdots\subset\G_{3}\subset\G_{2.5}\subset\G_{2}\subset\G_{1.5}\subset\C$$ which is defined by $K\in\G_{n}$ if $K$ bounds a Grope of height $n$ in $D^{4}$.  A Grope is a $2$-complex that will be defined in Section~\ref{definitions}.  As in the $(n)$-solvable filtration, a slice knot lies in $\G_{n}$ for all $n$, and so the highest $n$ for which $K\in\G_{n}$ but $K\not\in\G_{n.5}$ is some measure on how far \emph{geometrically} $K$ is from being slice.
	
	The non-triviality of the $(n)$-solvable filtration has been a topic of study since its introduction.  That $\F_{n}/\F_{n.5}$ has infinite rank was proved by Jiang in~\cite{bJ81} for $n=1$, by Cochran, Orr and Teichner in~\cite[Theorem 4.1]{COT04} for $n=2$, and by Cochran, Harvey and Leidy in~\cite[Theorem 8.1]{CHL07b} for all $n$.  Our main result is an analogous statement for the Grope filtration.
	
	\begin{mainresult}
		For every $n\geq 2$, $\G_{n}/\G_{n.5}$ has infinite rank.
	\end{mainresult}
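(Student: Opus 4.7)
The plan is to produce an infinite family $\{K_i\}_{i \in \N}$ of knots that simultaneously lie in $\G_n$ and are $\Z$-linearly independent in the quotient $\F_{n-2}/\F_{n-1.5}$.  Since Cochran--Orr--Teichner show that bounding a height-$m$ Grope in $D^4$ forces $(m-2)$-solvability (with an analogous statement at half-integer levels), one has the containment $\G_{n.5} \subseteq \F_{n-1.5}$.  Any $\Z$-linear relation among the classes $[K_i]$ in $\G_n/\G_{n.5}$ therefore descends to a relation in $\F_{n-2}/\F_{n-1.5}$, so linear independence in the latter quotient automatically upgrades to linear independence in the former and yields the theorem.

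For the knots themselves I would reuse the iterated-infection construction of Cochran--Harvey--Leidy that witnesses Theorem~8.1 of~\cite{CHL07b}, evaluated at level $n-2$.  There a carefully chosen seed knot is modified by successively tying an auxiliary infecting knot $J$ into axes $\eta$ lying in progressively deeper derived subgroups of the knot-exterior group, and non-triviality modulo $\F_{n-1.5}$ is detected using higher-order $L^2$ von Neumann $\rho$-invariants.  Proposition 8.2 of~\cite{CHL07b} supplies exactly the $\Z$-linear independence modulo $\F_{n-1.5}$ that the argument requires, so for this half of the proof I would only need to quote it.

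To put these knots into $\G_n$ I would invoke the local principle that, if $K$ bounds a height-$h$ Grope in $D^4$ and an infection axis $\eta$ lies in $\p(S^3 \setminus K)^{(k)}$, then infecting $K$ by any $J$ along $\eta$ produces a knot bounding a height-$(h+k)$ Grope, obtained by grafting a Grope tower realizing $\eta$ onto the original Grope.  Starting from a ribbon (hence arbitrarily-Grope-bounding) seed and iterating the CHL infections along their nested commutator axes, one accumulates to total Grope height $n$, exhibiting $K_i \in \G_n$.

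The main obstacle is this last bookkeeping step: one must verify that the specific axes that CHL choose to detect $(n-1.5)$-non-solvability can \emph{simultaneously} be realized as boundary circles of Grope stages of the correct depth, so that the depths add to $n$ rather than merely to the solvability level $n-2$.  A secondary technicality is pinning down the half-integer inclusion $\G_{n.5} \subseteq \F_{n-1.5}$ with the precision needed above; if this is not stated explicitly in~\cite{COT03}, it should follow from a mild refinement of the Grope-to-solvability translation there, checking that a height-$n.5$ Grope produces the additional $(n-1.5)$-Lagrangian required by the $(n-1.5)$-solvable condition.
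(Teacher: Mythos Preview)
Your high-level outline matches the paper's: construct knots in $\G_n$, then use $\G_{n.5}\subset\F_{n-1.5}$ (this is \cite[Theorem~8.11]{COT03}, which is stated for all $n\in\frac12\N$, so no refinement is needed) together with CHL-type $\rho$-invariant obstructions modulo $\F_{n-1.5}$.

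The genuine gap is your ``local principle,'' which is false as stated, and the failure is not a bookkeeping matter.  Take $K=R=9_{46}$, a ribbon knot, so $K$ bounds a height-$h$ Grope for every $h$; take $\eta\in\p(S^3\setminus R)^{(1)}$.  Your principle would place $R(\eta,J)$ in $\bigcap_h\G_{h+1}\subset\bigcap_h\F_h$ for \emph{every} $J$.  But this is exactly the first step of the CHL construction, whose output is shown \emph{not} to lie in $\F_{1.5}$; indeed the paper's own knots $K^1_m$ are of this form and are proved to lie in $\G_3\setminus\G_{3.5}$.  The point is that the Grope height gained by infection is controlled by the infecting knot, not by the derived-series depth of the axis: if $J$ is height-$m$ Grope concordant to the unknot then $R(\eta,J)$ is height-$m$ Grope concordant to $R$, landing in $\G_{\min(h,m)}$, not $\G_{h+k}$.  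So one cannot simply reuse the CHL knots; their infecting knots are chosen only for $\rho$-invariant reasons and carry no Grope information.

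Accordingly, the technical core of the paper is precisely the step you dismissed as bookkeeping.  The paper designs explicit infecting knots $P_m$ (via a surgery description) that (i) are height-$2$ Grope concordant to the unknot \emph{with tips isotopic to meridians of $P_m$}, and (ii) have $\Q$-linearly independent integrals $\rho_0(P_m)$.  It replaces the CHL axes $\alpha,\beta$ on $9_{46}$ by homotopic curves $\alpha',\beta'$ bounding \emph{disjoint capped tori} in $S^3\setminus R$.  Condition (i) forces the Grope tips, after infection, to become parallel copies of $\alpha',\beta'$, to which the capped tori can then be glued, raising the Grope height by one; iterating yields $K^n_m\in\G_{n+2}$.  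Finally the CHL algebraic machinery must be re-run for these modified axes and these particular $P_m$ (this is where condition (ii) and an explicit Alexander-polynomial computation enter), so even the non-solvability half is not a pure citation.
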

	
	Our main result improves upon previous results regarding the non-triviality of the Grope filtration.  For example, Cochran and Teichner proved in~\cite[Theorem 1.4]{CT07} the existence of knots of infinite order in $\G_{n}/\G_{n.5}$.  Our method of constructing knots bounding Gropes is similar to that of Cochran and Teichner.  Since $\G_{n+2}\subset\F_{n}$~\cite[Theorem 8.11]{COT03}, we are able to adapt Cochran, Harvey and Leidy's technique~\cite{CHL07b} to prove that no non-trivial linear combination of these knots lies in $\F_{n.5}$, hence not in $\G_{n+2.5}$.
	
	\begin{rem}
		We construct knots that bound smooth embeddings of gropes in $D^{4}$.  We state our main result so that is easy to read, but as stated, it is not the strongest possible assertion.  The result of Cochran, Harvey and Leidy implies that no non-trivial linear combination of these knots is even topologically $(n.5)$-solvable, hence does not bound even a topologically flat embedding of a height $n.5$ grope in $D^{4}$.
	\end{rem}
	
	 For the Grope filtration of the group of $m$-component string links with $m>1$, Harvey showed in~\cite[Theorem 6.13]{sH08} that for all $n$, $\G^{m}_{n}/\G^{m}_{n+2}$ contains an infinitely generated subgroup.  Let $\BG^{m}_{n}$ denote the subgroup of $\G^{m}_{n}$ whose elements are boundary links.  Combining~\cite[Theorem 6.13]{sH08} and~\cite[Theorem 4.4]{CH08}, it is known that for each $m>1$ and $n\geq 2$, the abelianization of $\BG^{m}_{n}/\BG^{m}_{n+1.5}$ has infinite rank.  We improve the result by narrowing the index gap.
	 
	 \begin{secondaryresult}
	 	For each $m>1$ and $n\geq 2$, the abelianization of $\BG^{m}_{n}/\BG^{m}_{n.5}$ has infinite rank; hence $\G_{n}^{m}/\G^{m}_{n.5}$ contains an infinitely generated subgroup.
	\end{secondaryresult}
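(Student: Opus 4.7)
The plan is to reduce the secondary theorem to the main theorem by promoting the knots supplied by the main result to boundary string links. By the main theorem, fix a sequence of knots $K_1,K_2,\ldots$ in $\G_n$ whose classes are $\Z$-linearly independent in $\G_n/\G_{n.5}$. For each $i$, define the $m$-component string link $L_i$ by tying $K_i$ into the first strand of the trivial $m$-component string link as a long knot. A Seifert surface for $K_i$ supported in a regular neighborhood of the first strand, together with $m-1$ narrow disk Seifert surfaces along the remaining trivial strands, exhibit pairwise-disjoint Seifert surfaces, so $L_i$ is a boundary string link. Similarly, a height-$n$ grope bounded by $K_i$ in $D^4$ glued to parallel disks for the trivial strands assembles into a height-$n$ grope bounded by $L_i$, so $L_i\in\BG^m_n$.

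The key ingredient is a homomorphism $\phi\colon \BG^m\to\C$ that closes up only the first strand of a string link. Stacking string links concatenates first strands, so $\phi$ carries stacking to connect sum and is a group homomorphism; concordances restrict to first-strand concordances, and a height-$k$ grope bounded by a string link in $D^2\times I\times I$ restricts, after attaching a slice disk in $D^4$ for the closing arc, to a height-$k$ grope bounded by $\phi(L)$ in $D^4$. Hence $\phi$ sends $\BG^m_k$ into $\G_k$ for every $k$, and descends to a map $\overline{\phi}\colon \BG^m_n/\BG^m_{n.5}\to\G_n/\G_{n.5}$ with $\overline{\phi}([L_i])=[K_i]$. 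Since the target is abelian, $\overline{\phi}$ factors through the abelianization of the source, and since the $[K_i]$ are $\Z$-independent, so are the classes $[L_i]$ in the abelianization of $\BG^m_n/\BG^m_{n.5}$, yielding infinite rank. The second assertion follows from the equality $\BG^m_{n.5}=\BG^m_n\cap \G^m_{n.5}$ (since $\G^m_{n.5}\subset \G^m_n$), which makes the inclusion $\BG^m_n\hookrightarrow \G^m_n$ descend to an injection $\BG^m_n/\BG^m_{n.5}\hookrightarrow \G^m_n/\G^m_{n.5}$ whose image contains $\{[L_i]\}$ and generates an infinitely generated subgroup.

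The principal obstacle is the geometric verification that $\phi$ respects the grope filtrations, i.e., that closing the first strand of a string link bounding a height-$k$ grope yields a knot still bounding a height-$k$ grope in $D^4$. This amounts to careful book-keeping of how the grope stages of a string link grope interact with the slice disk used to close the first strand, and should be essentially routine once the grope conventions for string links and for knots are aligned. A secondary subtlety is the well-definedness of $\phi$ at the level of concordance classes of boundary string links (verifying that the closing arc may be chosen coherently), but this is standard and independent of the Grope-theoretic machinery.
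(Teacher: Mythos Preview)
Your argument is correct, and it takes a genuinely different route from the paper's proof. The paper does not reduce the secondary result to the main theorem. Instead, for $n\geq 3$ it constructs examples by infecting the trivial $m$-string link $T$ along an embedded curve $\eta$ representing a class in $F^{(n-2)}\setminus F^{(n-1)}$ (where $F=\pi_1(D^2\times I-T)$) by the knots $P_k$; these are genuinely linked string links, not split unions. It then detects independence using Harvey's real-valued homomorphisms $\rho_n$ on boundary string links, together with the identity $\rho_n(T(\eta,P_k))=\rho_0(P_k)$ and the rational independence of the $\rho_0(P_k)$ established in Lemma~\ref{indepsigs}. Only the case $n=2$ in the paper uses split examples like yours, and even there the independence is deduced from the $\rho$-invariant calculation rather than from a projection to $\C$.

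Your approach is more economical: once the main theorem is in hand, the ``forget all but the first strand'' map $\C(m)\to\C$ is a filtration-preserving homomorphism almost tautologically (with the paper's definition of $\G^m_n$ --- that the components of $\widehat L$ bound disjoint height-$n$ Gropes in $D^4$ --- the first component already bounds such a Grope, so your ``principal obstacle'' dissolves), and the result follows formally. The paper's approach, by contrast, does not need the full main theorem (only the independence of the $\rho_0(P_k)$), and it produces non-split witnesses to the infinite rank, which may be of independent interest. Two small points of polish: your symbol $\BG^m$ is not defined in the paper (you mean $\B(m)$, or simply $\C(m)$ since the map is defined on all string links); and your description of $\phi$ as ``closing up only the first strand'' is cleaner if phrased as ``take the first component of $\widehat L$'' or equivalently ``forget strands $2,\ldots,m$ and close up''.
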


\section{Definitions}\label{definitions}

If $G$ is a group, the \textbf{derived series of $G$} is defined recursively by $\Gn{0}=G$ and $\Gn{i+1}=\left[\Gn{i},\Gn{i}\right]$.

Let $M$ be closed, connected, orientable 3-manifold.  Recall from~\cite{COT03} the definition of an $(n)$-solution for $M$.

\begin{defn}
	A smooth, spin 4-manifold $W$ with $\del W=M$ is an \textbf{$(n)$-solution} for $M$ if the inclusion-induced map $i_\ast:\H{1}{M}\to\H{1}{W}$ is an isomorphism and if there are embedded surfaces $L_i$ and $D_i$ (with product neighborhoods) for $i=1,\ldots,m$ that satisfy the following conditions:
	\begin{enumerate}
		\item the homology classes $\left\{\left[L_1\right],\left[D_1\right],\ldots,\left[L_m\right],\left[D_m\right]\right\}$ form an ordered basis for $\H{2}{W}$,
		\item the intersection form $\big(\H{2}{W},\cdot\big)$ with respect to this ordered basis is a direct sum of hyperbolics,
		\item $L_i\cap D_j$ is empty if $i\neq j$,
		\item for each $i$, $L_i$ and $D_i$ intersect transversely at a point, and
		\item each $L_i$ and $D_i$ are \textbf{$(n)$-surfaces}, i.e. $\p(L_i)\subset\pn{W}{n}$ and $\p(D_i)\subset\pn{W}{n}$.
	\end{enumerate}
	If, in addition, $\p(L_{i})\subset\pn{W}{n+1}$ for each $i$, we say $W$ is an \textbf{$(n.5)$-solution} for $M$.
	
	There is a slight generalization of $n$-solvability, called \textbf{rational $n$-solvablility}.  For the purposes of this paper, it suffices to know that an $n$-solution is also a rational $n$-solution.  If the above conditions hold except for the isomorphism on first homology, $W$ is called an \textbf{$(n)$-bordism} for $M$.
	
	If a closed, orientable 3-manifold bounds an $(n)$-solution, we say $M$ is \textbf{$(n)$-solvable}.  A knot $K$ in $S^3$ is said to be an \textbf{$(n)$-solvable knot} if $0$-surgery on $K$ is $(n)$-solvable.
\end{defn}

As in~\cite{COT03}, the set of all $(n)$-solvable knots is denoted $\F_n$, where it was shown that these $\F_n$ filter the topological knot concordance group $\C$, i.e. the $\F_n$ form a nested series of subgroups of $\C$: $$\{0\}\subset\cdots\subset\F_{n.5}\subset\F_n\subset\cdots\subset\F_{1.5}\subset\F_1\subset\F_{0.5}\subset\F_0\subset\C$$

The following definition is modified from the definition in~\cite{FT95}:

\begin{defn}
	A \textbf{grope} is a special pair (2-complex, base circle).  A grope has a \textbf{height} $n\in\frac{1}{2}\mathbb{N}$.  A grope of height $1$ is precisely a compact, oriented surface $\Sigma$ with a single boundary component (the base circle).  For $n\in\N$, a grope of height $n+1$ is defined recursively as follows: let $\{\a_i,\b_{i}: i=1,\ldots,2(g-1)\}$ be a symplectic basis of curves for $\Sigma$, the first stage of the grope.  Then a grope of height $n+1$ is formed by attaching gropes of height $n$ to each $\a_i$ and $\b_{i}$ along the base circles.
	
	A grope of height $1.5$ is formed by attaching gropes of height $1$ (i.e. surfaces) to a Lagrangian of a symplectic basis of curves for $\Sigma$.  That is, a grope of height $1.5$ is a surface with surfaces glued to `half' of the basis curves (i.e. the $\a_{i}$).  In general, a grope of height $n+1.5$ is obtained by attaching gropes of height $n$ to the $\a_{i}$ and gropes of height $n+1$ to the $\beta_{i}$.

	Given a 4-manifold $W$ with boundary $M$ and a framed circle $\gamma\subset M$, we say that $\gamma$ bounds a \textbf{Grope} (note the capital `G') in $W$ if $\gamma$ extends to a smooth embedding of a grope with its untwisted framing.  That is, parallel push-offs of Gropes can be taken in $W$.  Knots in $S^3$ are always equipped with the zero framing.
\end{defn}

	Cochran, Orr, and Teichner~\cite{COT03} also defined the Grope filtration of $\C$.  The set of all knots that bound Gropes of height $n$ in $D^4$ is denoted $\G_n$, which is a subgroup of $\C$.  Observe the following connection between gropes and the derived series: if a curve $\ell$ bounds a (map of a) grope of height $n$ in a space $X$, then $[\ell]\in\pn{X}{n}$.  It was shown in~\cite[Theorem 8.11]{COT03} that $\G_{n+2}\subset\F_n$ (for all $n\in\frac{1}{2}\N$).

	We will also use a few definitions from~\cite{CT07}.

\begin{defn}
	An \textbf{annular grope of height $n$} is a grope of height $n$ that has an extra boundary component on its first stage.  We say that the two boundary components of an annular grope cobound an annular grope.  Two knots $K_{0}$ and $K_{1}$ are \textbf{height $n$ Grope concordant} if they cobound a height $n$ annular Grope $G$ in $S^{3}\times[0,1]$ such that $G\cap\left(S^{3}\times\{i\}\right)=K_{i}$ for $i=0,1$.  For example, if $K$ is height $n$ Grope concordant to a slice knot, then $K\in\G_{n}$.  The captial `G' in `annular Grope' indicates the untwisted framing.
\end{defn}

	Let $G$ be a height $n$ Grope in $S^{3}\times [0,1]$ bounded by a knot $K\subset S^{3}\times\{0\}$.  The union of sets of basis curves for each $n$-th stage surface of $G$ is called a \textbf{set of tips} for $G$.  If the tips lie on annuli in $S^{3}\times[0,1]$ that are disjoint from each other and disjoint from the Grope (except at the tips) and the other boundary components of the annuli lie in $S^{3}\times\{1\}$, we say `the tips are isotopic' to the the link determined by the other boundary components of these annuli (a link in $S^{3}\times\{1\}$).
	
	A \textbf{capped Grope} is the union of a Grope and a disjoint union of discs where the boundaries of the discs form a full set of tips of the Grope.  The interiors of these discs must not intersect the Grope except perhaps on one of the boundary components of the Grope's first stage.  These discs are called the `caps' of the Grope.  For example, if $K$ bounds a Grope, the caps are allowed to hit $K$.
	
	The construction of our examples relies on a technique known as \emph{infection}.  Let $R$ be a fixed knot or link and $K$ be a fixed knot in $S^{3}$.  Suppose $\alpha$ is a simple closed curve in $S^{3}-R$ such that $\alpha$ is itself the unknot.  Some number $m$ of strands of $R$ pierce the disc bounded by $\alpha$.  Let $R(\alpha,K)$ denote the knot obtained by replacing the $m$ trivial strands of $R$ by $m$ strands `tied into the knot K.'  See Figure~\ref{infection}.  More precisely, one obtains $R(\alpha,K)$ by removing a regular neighborhood of $\alpha$ and gluing in $S^{3}-K$ in such a way that identifies the meridian of $\alpha$ with the longitude of $K$ and the longitude of $\alpha$ with the meridian of $K$.  We say $R(\alpha,K)$ is the result of infecting $R$ by $K$ along $\alpha$.
	
	\begin{figure}[ht!]
		\begin{center}
			\begin{picture}(140, 75) (0,0)
				\put(-8, 40){$\alpha$}
				\put(47, 60){$R$}
				\put(62, 37){\vector(1,0){20}}
				\includegraphics{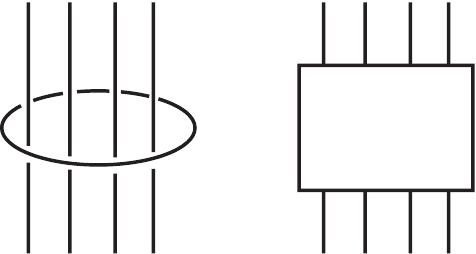}
				\put(-30, 34){$K$}
			\end{picture}
			\caption{Infecting $R$ by $K$ along $\alpha$}
			\label{infection}
		\end{center}
	\end{figure}
	
	Let $(H,J)$ denote the zero-framed, ordered Hopf link in the complement of a knot or link $R$ in Figure~\ref{hopf}.  Surgery on $(H,J)$ is homeomorphic to $S^{3}$~\cite[\S 4, Proposition 2]{rK78}, so the image of $R$ after performing this Hopf surgery is a new knot or link in $S^{3}$.  By sliding the strands of $R$ (that pass through $J$) over $H$ and cancelling the Hopf pair, one sees the effect of Hopf surgery in Figure~\ref{hopf}.  We will use the terminology `performing the handle cancellation $J-H$' to mean `performing $(H,J)$ Hopf surgery.'  This convention matches~\cite[p 350]{CT07}.
	
	\begin{figure}[ht!]
		\begin{center}
			\begin{picture}(165, 75)(0,0)
				\put(-8, 35){$J$}
					\put(0, 22){$0$}
				\put(70, 40){$H$}
					\put(65, 20){$0$}
				\put(75, 32){\vector(1,0){20}}
				\put(-2, 60){$R$}
				\includegraphics{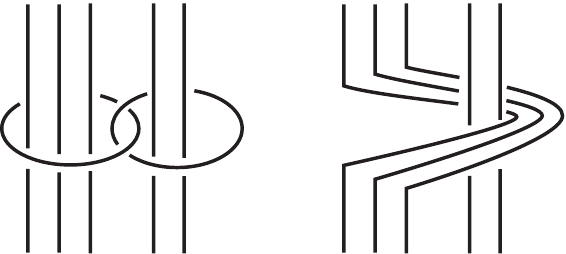}
			\end{picture}
			\caption{The effect of a Hopf surgery}
			\label{hopf}
		\end{center}
	\end{figure}

\section{Knots Bounding Gropes of Height $n+2$}\label{highgropes}

	Fix an $m\geq 1$.  For each $n\geq 1$, we will construct a knot $K^{n}_{m}$ by infecting a seed knot $R$ along certain curves in $\pn{S^{3}-R}{1}$ by a knot $K_{m}^{n-1}$.  Roughly speaking, we construct a height $n+2$ Grope bounded by $K^{n}_{m}$ in two stages.  First we prove each $K_{m}^{n-1}$ bounds a height $n+1$ Grope whose tips are isotopic to meridians of $K_{m}^{n-1}$.  When we infect $R$ by $K_{m}^{n-1}$, we will see that the tips of this Grope are isotopic to the infection curves, which we will then prove bound capped Gropes of height $1$.  Gluing these height $1$ capped Gropes onto the tips of the height $n+1$ Gropes for $K_{m}^{n-1}$ (and gluing a few annuli and slice discs) will result in a height $n+2$ Grope bounded by $K^{n}_{m}$.

\subsection{The infecting knots $P_{m}$}\label{infectingknots}

	Let $P_{m}$ be the knot defined in Figure~\ref{Pm}.  A priori, we have described $P_{m}$ as a knot in some closed, oriented $3$-manifold that is the image of $U$ after performing $0$-surgery on the $(10+4m)$-component link in Figure~\ref{Pm}.  Using Kirby's calculus of framed links (see, for example,~\cite[Chapter 5]{GS99}), one can check that this $3$-manifold is $S^{3}$.

\begin{figure}[ht!]
	\begin{center}
		\begin{picture}(375, 110)(0, 0)
			\put(54, 4){$\cdots$}
			\put(54, 20){$\cdots$}
			\put(54, 70){$\cdots$}
			\put(12, 0){$\underbrace{\hspace{4.8cm}}$}
			\put(76, -13){$m$}
			\put(262, 90){$P_{m}$}
			\put(358, 54){$P_{m}$}
			\put(275, 54){\Large$\sim$}
			\put(11, 68){\small$0$}
			\put(31, 68){\small$0$}
			\put(69, 68){\small$0$}
			\put(89, 68){\small$0$}
			\put(111, 68){\small$0$}
			\put(131, 68){\small$0$}
			\put(161, 68){\small$0$}
			\put(200, 70){\small$0$}
			\put(11, 38){\small$0$}
			\put(32, 38){\small$0$}
			\put(69, 38){\small$0$}
			\put(90, 38){\small$0$}
			\put(111, 38){\small$0$}
			\put(132, 38){\small$0$}
			\put(160, 50){\small$0$}
			\put(203, 50){\small$0$}
			\put(168, 28){\small$0$}
			\put(15, 6){\small$0$}
			\put(181, 10){\small$0$}
			\put(205, 33){\small$0$}
			\put(237, 33){\small$0$}
			\put(240, 68){\small$0$}
			\includegraphics{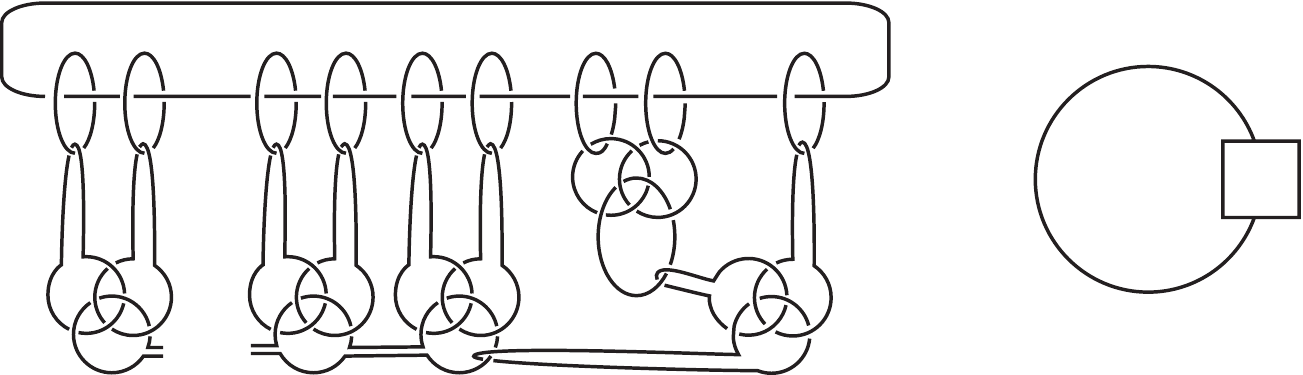}
		\end{picture}
		\caption{A surgery description of $P_{m}$}
		\label{Pm}
	\end{center}
\end{figure}

\begin{prop}\label{grope}
	Each $P_{m}$ cobounds with the unknot a height $2$ annular Grope in in $S^{3}\times[0,1]$ whose tips are isotopic to a set of disjoint copies of meridians of $P_{m}$.
\end{prop}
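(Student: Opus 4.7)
The plan is to construct the Grope directly from the surgery description in Figure~\ref{Pm}, reading the visible rows of Hopf pairs as instructions for building a layered surface cobordism between $U$ and $P_m$. I begin with a standard meridian disc $D$ for $U$ in $S^3\times\{1\}$ and push its interior slightly into $S^3\times[0,1]$. The outermost row of Hopf pairs in the diagram can be cancelled one at a time (each as a handle cancellation $J-H$ in the sense of Section~\ref{definitions}), and this sequence of cancellations is precisely what realizes $P_m$ as the image of $U$. Interpret each cancellation geometrically: when we slide $U$ over $H$ and cancel the pair $(H,J)$, the disc $D$ is modified by tubing along (a pushed-in copy of) the zero-framed unknot $H$. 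Since each such $H$ is unknotted with zero framing, the tubing is realized by a smoothly embedded annulus in $S^3\times[0,1]$. After all the outermost pairs have been cancelled, $D$ has been replaced by an embedded surface $\Sigma$ of some genus $g$ with $\del\Sigma=P_m\sqcup U'$ (where $U'\subset S^3\times\{1\}$ is an unknot), and this $\Sigma$ is the first stage of the annular Grope.

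Next I choose a symplectic basis $\{\alpha_i,\beta_i\}$ for $\Sigma$ in which each $\alpha_i$ is the core circle of the $i$-th tube and each $\beta_i$ is the obvious dual curve traversing that tube; under this choice $\alpha_i$ is isotopic in the ambient space to a meridian of the first-stage component $H_i$ that was cancelled. The middle row of Hopf pairs in Figure~\ref{Pm} is arranged so that each $H_i$ is itself involved in a Hopf-pair configuration at the next level up, which means that, by the same tubing procedure performed one level higher, $\alpha_i$ and $\beta_i$ each bound an embedded surface disjoint from $\Sigma$ except along its own boundary. Attaching all of these surfaces to $\Sigma$ along the $\alpha_i$ and $\beta_i$ produces the second stage of the height $2$ Grope.

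Finally, the tips are the symplectic basis curves of the second-stage surfaces, and by construction each such tip is isotopic to a meridian of one of the topmost $0$-framed components in Figure~\ref{Pm}. The diagram is drawn so that each of these topmost components lies in a small ball adjacent to $P_m$, and hence its meridian is isotopic via a disjoint annulus in $S^3\times[0,1]$ to a parallel copy of a meridian of $P_m$ sitting in $S^3\times\{0\}$. The main technical obstacle is this last bookkeeping step: verifying that the collection of tubes, the second-stage surfaces, and the connecting isotopy annuli can all be chosen to be pairwise disjoint and to meet $\Sigma$ only along the prescribed basis curves. I would handle this by performing the cancellations inside pairwise disjoint vertical slabs of $S^3\times[0,1]$ so that the pieces associated to different Hopf pairs never interact, using the product structure together with the fact that after each round of cancellations the relevant meridians are unlinked from the remaining components in the diagram.
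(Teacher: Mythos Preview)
Your general intuition---that the surgery description in Figure~\ref{Pm} encodes a layered grope and that the Hopf cancellations trace out its stages---is exactly right, and is the idea behind the paper's proof.  But the specific construction you describe does not match the diagram, and in two places it breaks down.

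First, the opening move is incoherent as written.  You start with a meridian \emph{disc} $D$ for $U$ and claim that after tubing you obtain a surface $\Sigma$ with $\partial\Sigma = P_m\sqcup U'$.  Tubing a disc cannot produce a surface with two boundary components.  What the paper actually does is observe that $P_m$ and a specific unknot $U$ sit in the \emph{same} $S^3$ (after the surgeries) and differ only by whether they pass through the single curve $H_1$; this gives a once-punctured \emph{annulus} between them.  That single puncture is then tubed over $H_1$ onto the torus carrying $H_0$.  So the first stage has genus \emph{one}, not genus $g$ for some number of cancelled Hopf pairs.

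Second, your ``row-by-row'' reading misidentifies the hierarchy.  After the first stage, the two symplectic basis curves on the $H_0$-torus are tubed over $H_2$ and $H_3$ onto the punctured surfaces carried by $H_4$ and $H_9$; the $2m+2$ Hopf cancellations $H_7-H_5,\ H_8-H_6,\ H_{12}-H_{10},\ldots$ are precisely what turn those punctured surfaces into closed second-stage surfaces (of genus $2$ and genus $m$, respectively).  There is no ``middle row of Hopf pairs'' supplying a surface for each $\alpha_i,\beta_i$; rather, all of the Hopf pairs feed into just two second-stage pieces.  Finally, the claim that the tips are meridians of $P_m$ is the key output of the proposition (it is what makes the inductive step in Theorem~\ref{knmingn+2} go), and it has to be read off from the explicit picture of the $H_4$ and $H_9$ surfaces; your ``topmost components lie in a small ball adjacent to $P_m$'' does not establish it.
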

\begin{proof}
	We claim that Figure~\ref{gropepicture} depicts a height $2$ annular grope in $S^{3}$ between $P_{m}$ and $U$.  Let $L$ denote the $(10+4m)$-component link $L=(H_{0},\ldots, H_{9+4m})$ in Figure~\ref{firstgropepicture}; the link $L$ is isotopic to the link defining $P_{m}$ in Figure~\ref{Pm}.  In Figure~\ref{firstgropepicture}, we have shaded in some punctured surfaces bounded by the components $H_{0}$, $H_{4}$, and $H_{9}$.  Once we perform $0$-surgery on $L$, these punctured surfaces will become closed surfaces, and these surfaces will be used to construct a height $2$ Grope.
	\begin{figure}[ht!]
		\begin{center}
			\begin{picture}(260, 210)(0,0)
				\put(72, 93){$\cdots$}
				     \put(252, 100){\tiny $H_{0}$}
				     \put(248, 170){\tiny $H_{1}$}
				     \put(172, 70){\tiny $H_{2}$}
				     \put(202, 70){\tiny $H_{3}$}
				     \put(212, 110){\tiny $H_{4}$}
				     \put(182, 150){\tiny $H_{5}$}
				     \put(216, 150){\tiny $H_{6}$}
				     \put(180, 170){\tiny $H_{7}$}
				     \put(214, 170){\tiny $H_{8}$}
				     \put(118, 85){\tiny $H_{9}$}
				     \put(85, 150){\tiny $H_{10}$}
				     \put(147, 150){\tiny $H_{11}$}
				     \put(82, 170){\tiny $H_{12}$}
				     \put(145, 170){\tiny $H_{13}$}
				     \put(-8, 150){\tiny $H_{6+4m}$}
				     \put(-10, 170){\tiny $H_{8+4m}$}
				     \put(38, 200){\tiny $H_{9+4m}$}
				     \put(42, 198){\vector(2,-3){7}}
				     \put(68, 198){\tiny $H_{7+4m}$}
				     \put(76, 194){\vector(-1,-4){9}}
				\put(10, 70){$\underbrace{\hspace{4.7cm}}$}
				\put(72, 55){$m$}
				\includegraphics{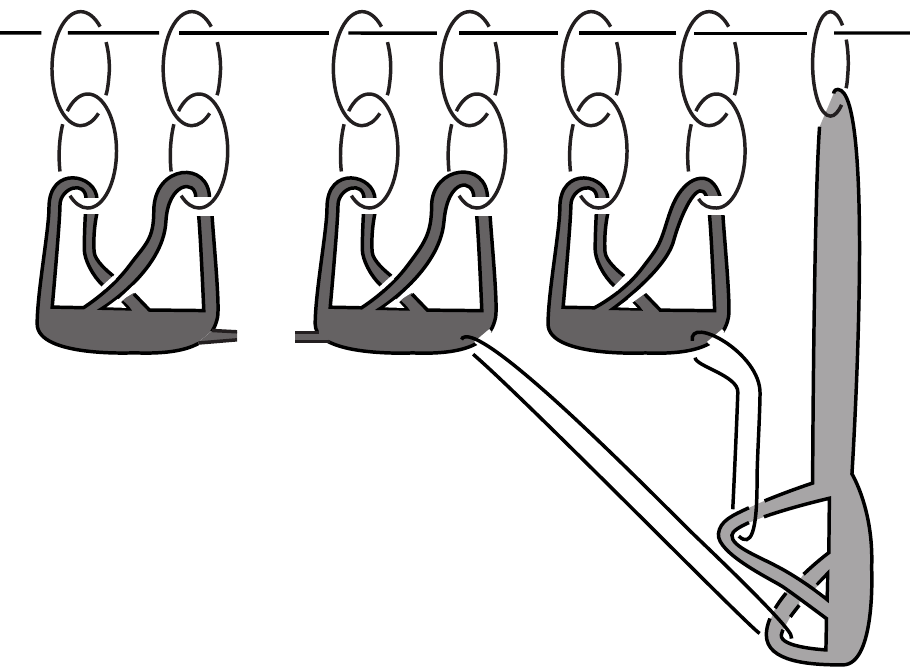}
			\end{picture}
			\caption{The link $L$}
			\label{firstgropepicture}
		\end{center}
	\end{figure}
	
	We perform $2m+2$ handle cancellations,
	$$H_{7}-H_{5},\hspace{3mm} H_{8}-H_{6},\hspace{3mm}  H_{12}-H_{10},\hspace{3mm}  H_{13}-H_{11},\hspace{3mm}  \ldots, H_{8+4m}-H_{6+4m},\hspace{3mm}  H_{9+4m}-H_{7+4m},$$
	to arrive at Figure~\ref{gropepicture} (ignore the $U$ and shaded parts for now).  What remains is a $6$-component link $N=(H_{0},H_{1}, H_{2}, H_{3},H_{4},H_{9})$.  Let $N'$ denote the link $N-\{H_{0},H_{1}\}$; $N'$ consists of two separated Hopf links, thus $0$-surgery on $N'$ (denoted $S^{3}_{N'}$) is homeomorphic to $S^{3}$.  Let $U$ be the knot in Figure~\ref{gropepicture} that  deviates only slightly from $P_{m}$.  The resulting knot $U'$ after $0$-surgery on $N'$ is the unknot in $S^{3}\cong S^{3}_{N'}$, since $U$ does not hit the discs bounded by $H_{1}$, $H_{2}$ and $H_{3}$.
	
	There is a punctured annulus in $S^{3}$ bounded by $U$ and $P_{m}$; the lightest shade of grey in Figure~\ref{gropepicture} is \emph{part} of this annulus.  The other part of this annulus has been collapsed to simplify the diagram.  To the puncture, glue one boundary component of a tube.  Push this annulus over $H_{1}$, and glue the other boundary component to the circle where $H_{1}$ punctures the torus on which $H_{0}$ lies (second darkest shade of grey in Figure~\ref{gropepicture}).  So far we have a height $1$ (and genus $1$) annular grope between $P_{m}$ and the unknot in $S^{3}$.
	
	One may take the obvious symplectic basis elements for the aforementioned torus, tube them over $H_{2}$ and $H_{3}$ and attach them to punctured the surfaces for $H_{4}$ and $H_{9}$ (the darkest shade of grey in Figure~\ref{gropepicture}).  Recall that after zero surgery on the link, $H_{4}$ and $H_{9}$ lie on closed surfaces.  Now we have a height $2$ annular grope in $S^{3}$ between $P_{m}$ and the unknot.  Figure~\ref{gropepicture} shows that the tips of this annular grope are isotopic to meridians of $P_{m}$.
	\begin{figure}[ht!]
		\begin{center}
			\begin{picture}(260, 200)(0,0)
					\put(266, 180){$P_{m}$}
					\put(266, 200){$U$}
					\put(252, 100){\tiny $H_{0}$}
					\put(250, 170){\tiny $H_{1}$}
					\put(172, 70){\tiny $H_{2}$}
					\put(202, 70){\tiny $H_{3}$}
					\put(212, 110){\tiny $H_{4}$}
					\put(118, 85){\tiny $H_{9}$}
					\put(72, 93){$\cdots$}
					\put(10, 70){$\underbrace{\hspace{4.7cm}}$}
					\put(72, 55){$m$}
					\includegraphics{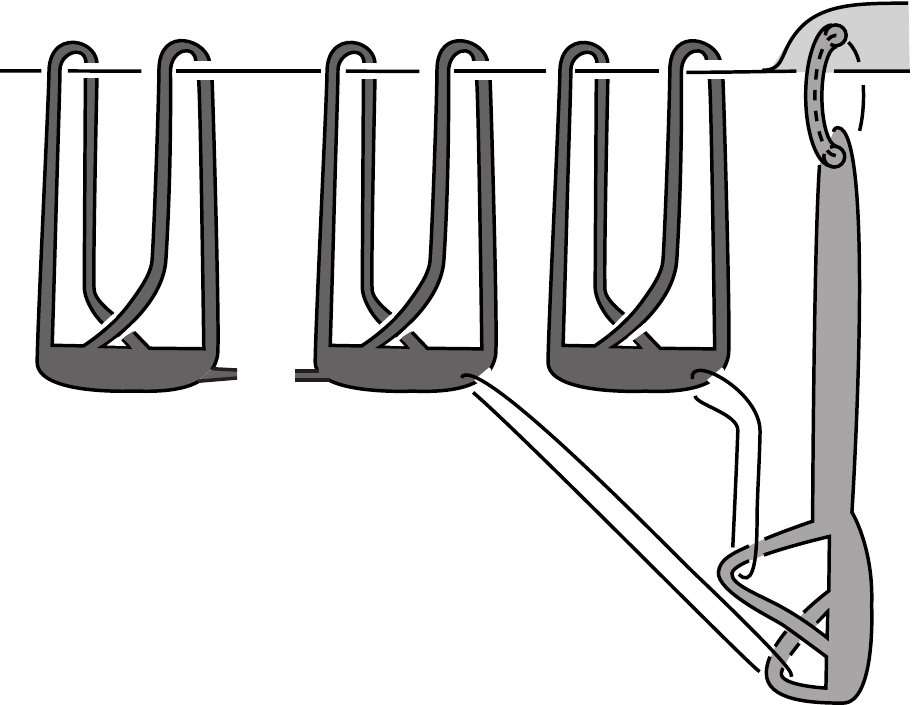}
			\end{picture}
			\caption{A height $2$ Grope concordance from the unknot $U$ to $P_{m}$}
			\label{gropepicture}
		\end{center}
	\end{figure}
	
	One can push the $U$ boundary of component of this annular grope in the positive $I$ direction and the $P_{m}$ boundary component in the negative $I$ direction to arrive at the desired result.
\end{proof}

\subsection{The seed knot $R$ and the infection curves}\label{seeds}

Our examples are inspired by Cochran, Harvey and Leidy's family $J_{n}$.  They construct their family inductively by infecting the $9_{46}$ knot along the curves $\a$ and $\b$ in Figure~\ref{infectingcurves}.  Note that the obvious tori in $S^{3}-9_{46}$ bounded by $\a$ and $\b$ intersect.  For our construction of the Gropes for $K^{n}_{m}$, it is necessary that our infection curves bound disjoint surfaces in the complement of $9_{46}$.  We homotope $\a$ and $\b$ to arrive at different infection curves $\a'$ and $\b'$.

\begin{figure}[h!]
	\begin{center}
		\begin{picture}(330, 160)(0,0)
			\put(26, 65){\small$\alpha$}
			\put(104, 65){\small$\beta$}
			\put(188, 48){\small$\alpha'$}
			\put(195, 90){\small$\beta'$}
			\put(200, 100){\vector(1,3){4}}
			\includegraphics[bb=0 0 777 398, scale=.4]{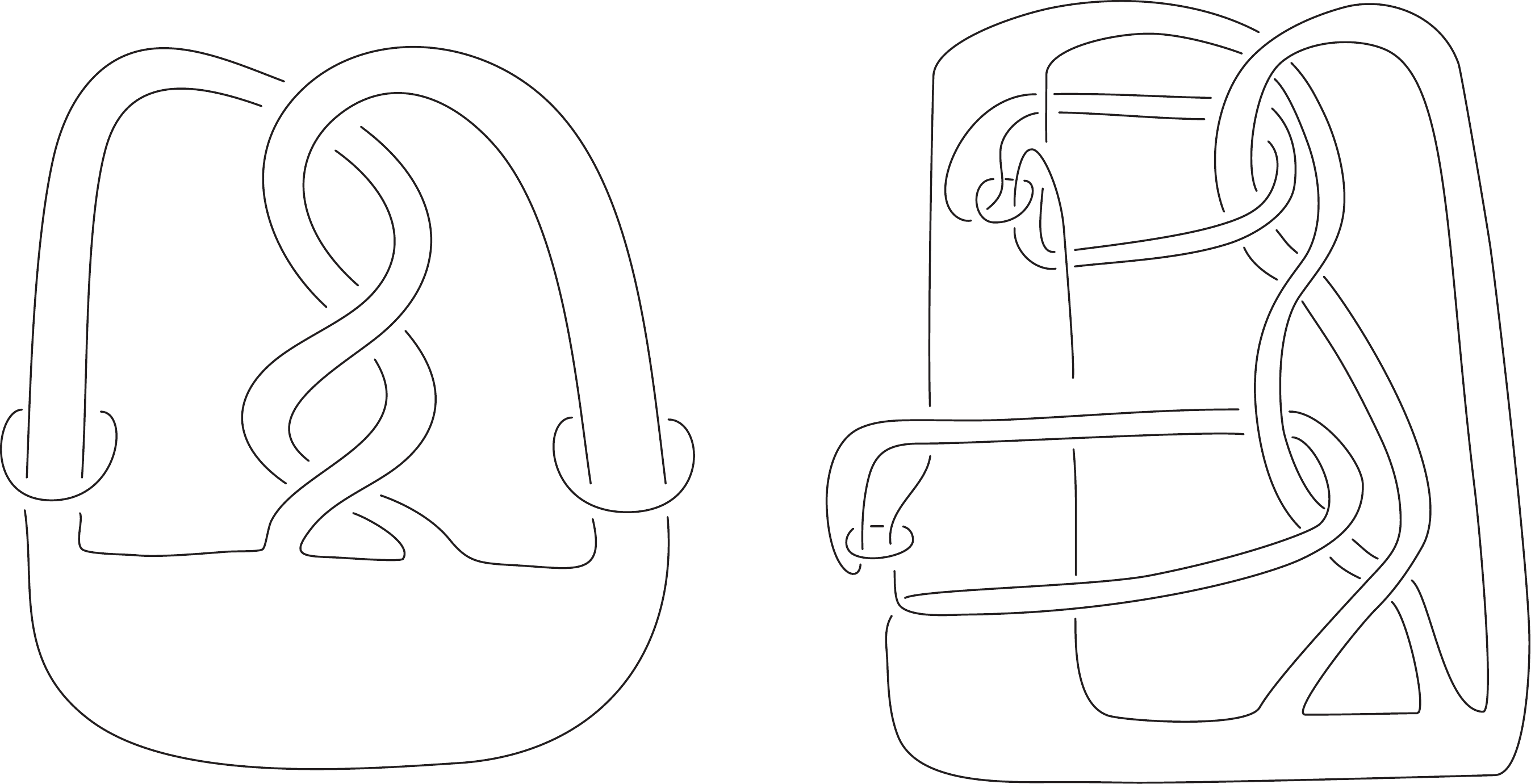}
		\end{picture}
		\caption{The infection curves $\alpha$ and $\beta$, and homotopic infection curves $\alpha'$ and $\beta'$}
		\label{infectingcurves}
	\end{center}
\end{figure}

Let $R=9_{46}$.  By Figure~\ref{curvesboundcappedtori}, the curves $\a'$ and $\b'$ bound tori that are disjoint and that miss $R$.  The torus bounded by $\a'$ has a symplectic basis $x_{1},x_{2}$ (the curve $x_{2}$ has been isotoped off of the torus) of curves that inherit the zero framing from the torus.  The torus bounded by $\b'$ has a similar symplectic basis $y_{1},y_{2}$.  It is clear from Figure~\ref{curvesboundcappedtori} that the curves $x_{1},x_{2},y_{1},y_{2}$ bound disjoint discs that hit the knot $R$ transversely.  Thus, $\a'$ and $\b'$ bound disjoint capped tori in the complement of $R$.

\begin{figure}[ht!]
	\begin{center}
		\begin{picture}(280, 295)(0, 0)
			\put(34, 100){\small $\a'$}
			\put(46, 223){\small $\b'$}
			\put(8, 80){\small $x_{1}$}
			\put(132, 120){\small $x_{2}$}
			\put(56, 199){\small $y_{1}$}
			\put(105, 230){\small $y_{2}$}
			\includegraphics[scale=.7]{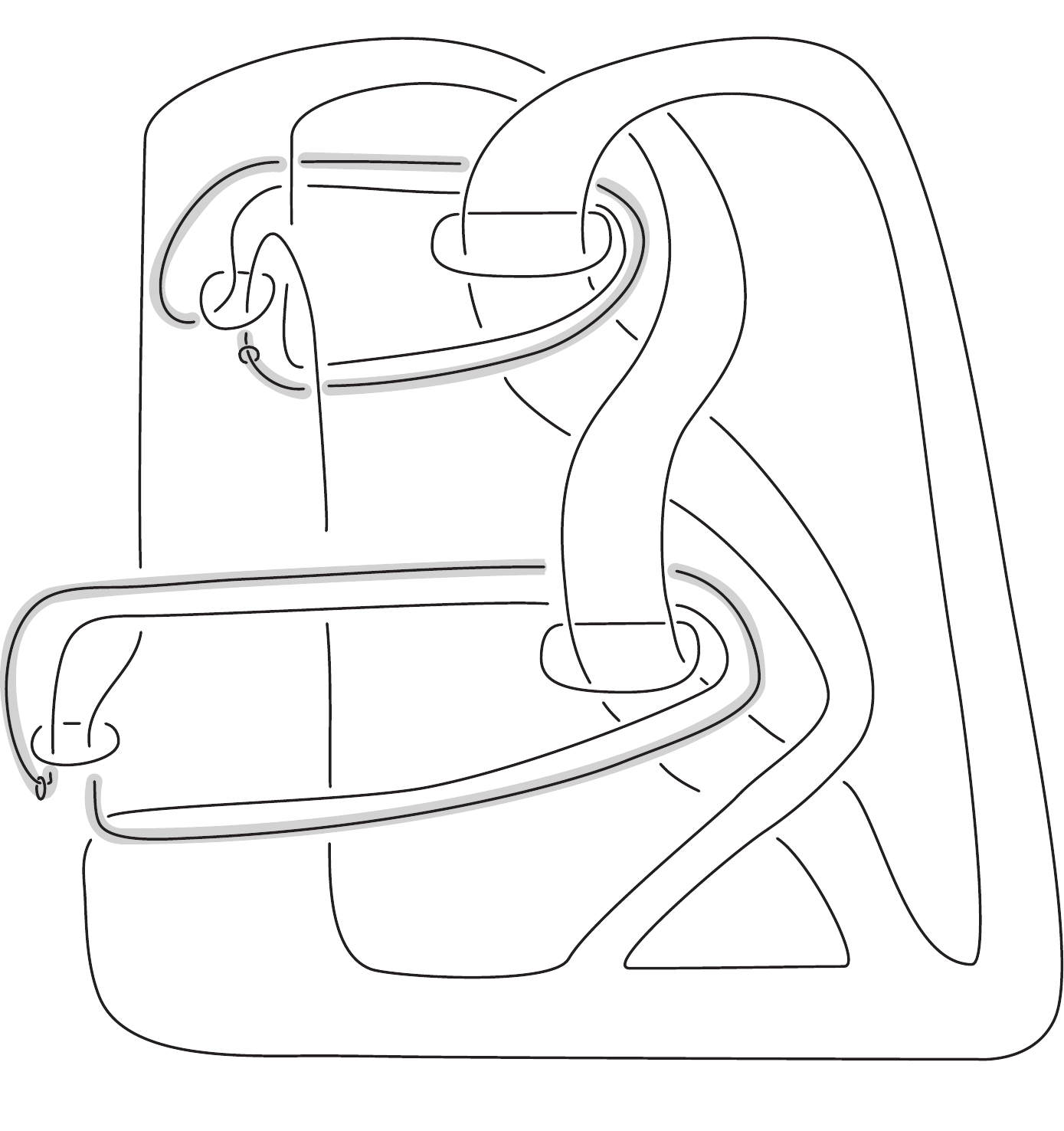}
		\end{picture}
		\caption{Infection curves bound disjoint capped tori}
		\label{curvesboundcappedtori}
	\end{center}
\end{figure}

\subsection{Knots bounding Gropes of height $n+2$}\label{ourexamples}

	We now describe our family of knots $K^{n}_{m}$ for $n,m\geq 1$ with the property that $K^{n}_{m}\in{\mathcal{G}}_{n+2}$.  Recall $R=9_{46}$, and $\alpha'$ and $\beta'$ be from Figure~\ref{infectingcurves}.  The curves $\alpha'$ and $\beta'$ bound punctured tori that are disjointly embedded in $S^{3}-R$.  Let $K^{1}_{m}$ denote $R$ infected along $\alpha'$ and $\beta'$ by our knot $P_{m}$.  Define $K^{n+1}_{m}$ by infecting $R$ along $\alpha'$ and $\beta'$ by $K^{n}_{m}$.   Observe that $g(K^{n}_{m})\leq 3$ since $\alpha'$ and $\beta'$ miss a genus three Seifert surface for $R$.

\begin{prop}\label{heightngropeconcordinfection}
	Let $R$ be a knot in $S^{3}$, $\a$ an unknotted curve in $S^{3}-R$.  If $K$ and $J$ are knots that are height $n$ Grope concordant, then the results of infection $R(\a,K)$ and $R(\a,J)$ are height $n$ Grope concordant.
\end{prop}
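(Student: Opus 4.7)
The plan is to adapt the standard argument that ordinary concordance is preserved under infection to the setting of Grope concordance, by performing the infection four-dimensionally over the given annular Grope. Let $G \subset S^3 \times [0,1]$ be a height-$n$ annular Grope cobounded by $K$ and $J$. The goal is to construct from $G$ a height-$n$ annular Grope $G' \subset S^3 \times [0,1]$ cobounded by $R(\alpha, K)$ and $R(\alpha, J)$.

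I would start with the trivial product $R \times [0,1] \subset S^3 \times [0,1]$, an annulus cobounded by two copies of $R$, and choose a tubular neighborhood $\mathcal{N} \cong \nu(\alpha) \times [0,1]$ of $\alpha \times [0,1]$ disjoint from $R \times [0,1]$. Inside $\mathcal{N}$, the annulus $R \times [0,1]$ appears as $m$ parallel rectangles, corresponding to the $m$ strands of $R$ that pierce the disk bounded by $\alpha$, crossed with $[0,1]$. The modification of $R \times [0,1]$ will take place entirely inside $\mathcal{N}$.

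To perform the modification, take $m$ parallel pushoffs $G_1, \dots, G_m$ of $G$ using its untwisted framing (available since $G$ is a capital-$G$ Grope); each $G_i$ is itself a height-$n$ annular Grope cobounded by parallel copies of $K$ and $J$. Cut each first-stage surface $\Sigma_i$ of $G_i$ along a properly embedded arc joining its two boundary circles, producing a genus-$g$ surface with a single boundary circle. Identify the two cut-arcs on each $\Sigma_i$ with the two vertical sides of the $i$-th rectangle on $\partial\nu(\alpha) \times [0,1]$. Together with $(R \times [0,1]) \setminus \mathcal{N}$, this yields a connected oriented surface $\Sigma'$ of genus $mg$ with two boundary circles $R(\alpha, K)$ and $R(\alpha, J)$. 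Attaching the parallel pushoffs of the higher-stage Gropes of $G$ to a symplectic basis of $\Sigma'$ completes $G'$.

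The main obstacle is verifying that $G'$ really is a height-$n$ annular Grope. A symplectic basis of $\Sigma'$ can be chosen to consist of parallel pushoffs of the symplectic bases of the $\Sigma_i$, since cutting along an arc between two boundary components does not introduce new non-boundary homology classes. Each such basis curve bounds a height-$(n-1)$ Grope, namely the corresponding pushoff of a higher stage of $G$, so $G'$ has height $n$. The untwisted framing on $G'$ is inherited from the untwisted framing on $G$, and the parallel pushoffs $G_i$ can be arranged disjointly inside $\mathcal{N}$ since they lie in a thin regular neighborhood of $G$.
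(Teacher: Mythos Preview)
Your approach is essentially the paper's: slit the annular Grope $G$ along an arc joining its two boundary circles, take $m$ parallel framed copies of the slit Grope, and splice them into the product annulus for $R$. The paper phrases this as building a ``height $n$ slit Grope concordance'' between $(D^{3},m\,K)$ and $(D^{3},m\,J)$ and gluing it to $\bigl(R-(R\cap D^{3})\bigr)\times[0,1]$.

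There is, however, a genuine slip in your setup of the region $\mathcal{N}$. You define $\mathcal{N}\cong\nu(\alpha)\times[0,1]$ as a tubular neighborhood of $\alpha\times[0,1]$ \emph{disjoint} from $R\times[0,1]$, and in the very next sentence assert that $R\times[0,1]$ appears inside $\mathcal{N}$ as $m$ parallel rectangles --- an internal contradiction. The tubular neighborhood of the circle $\alpha$ is a solid torus that misses $R$ entirely, so no modification carried out inside it can possibly change $R$ into $R(\alpha,K)$. What you actually want is the region the paper uses: a $3$--ball $D^{3}$ that is a regular neighborhood of the spanning \emph{disk} for $\alpha$, so that $R\cap D^{3}$ consists of $m$ trivial arcs and infection replaces these by $m$ arcs tied into $K$. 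With $\mathcal{N}=D^{3}\times[0,1]$ the rest of your argument (cutting each $\Sigma_{i}$ along an arc between its boundary circles, matching the cut edges to the vertical sides of the $i$-th rectangle, and carrying along the higher stages by framed pushoff) goes through and agrees with the paper's sketch.
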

\newpage
\begin{proof}[Sketch of Proof.]
	The knots $R(\a,K)$ and $R(\a,J)$ differ by what happens inside a fixed ball $D^{3}$:
	\begin{center}
		\begin{picture}(122, 83)(0,0)
			\includegraphics{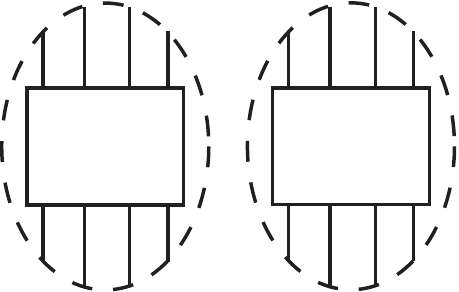}
			\put(-106, 38){$K$}
			\put(-34, 38){$J$}
		\end{picture}
	\end{center}
	Let $(D^{3},m\,K)$ denote the left-hand side of this diagram, i.e. a 3-ball with $m$ arcs tied into $K$.  Since $K$ and $J$ are height $n$ Grope concordant, so are $(D^{3},m\,K)$ and $(D^{3},m\,J)$.  We see this by slitting a height $n$ Grope concordance from $K$ to $J$, which turns it into a `height $n$ slit Grope concordance' from one knotted arc to another.  Taking parallel copies of this slit Grope concordance in $D^{3}\times[0,1]$ yields $G$, the `height $n$ slit Grope concordance' in $D^{3}\times[0,1]$ from $(D^{3},m\,K)\times\{0\}$ to $(D^{3},m\,J)\times\{1\}$.  We simply glue this $G$ to $\left(R-\left(R\cap D^{3}\right)\right) \times[0,1]$ to arrive at a height $n$ Grope concordance from $R(\a,K)$ to $R(\a,J)$.
\end{proof}

\begin{prop}\label{basecase}
	Let $R=9_{46}$ and $K^{1}_{m}$ be defined as above.  Then $K^{1}_{m}$ is height $3$ capped Grope concordant to $R$.
\end{prop}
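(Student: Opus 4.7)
The plan is to combine the height-$2$ annular Grope between $P_m$ and the unknot from Proposition~\ref{grope} with the capped tori from Section~\ref{seeds}. Since $P_m$ is height-$2$ Grope concordant to the unknot, applying Proposition~\ref{heightngropeconcordinfection} once along $\alpha'$ and once along $\beta'$ produces a height-$2$ annular Grope concordance $G\subset S^3\times[0,1]$ from $K^1_m$ to $R$. Inspecting the construction in the sketch of Proposition~\ref{heightngropeconcordinfection}, the tips of $G$ are obtained by slitting and taking parallel copies of the tips of the Grope furnished by Proposition~\ref{grope}; those tips are themselves isotopic to meridians of $P_m$.

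Next I would identify these tips as curves in $S^3\times\{1\}-R$. By the infection convention (meridian of $\alpha'$ glued to longitude of $P_m$, and longitude of $\alpha'$ glued to meridian of $P_m$), each meridian of $P_m$ sitting inside $K^1_m$ is a longitude of $\alpha'$, i.e.\ a parallel copy of $\alpha'$ pushed off $\partial N(\alpha')$; likewise for $\beta'$. Because such parallel copies live in the un-infected region $S^3-N(\alpha')-N(\beta')$, which is unchanged by the infection and sits inside both the $K^1_m$-end and the $R$-end of $S^3\times[0,1]$, I can isotope the tips of $G$ along annuli in the complement of $G$ until they become parallel copies of $\alpha'$ and $\beta'$ lying in $S^3\times\{1\}-R$.

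By Section~\ref{seeds}, the curves $\alpha'$ and $\beta'$ bound mutually disjoint capped tori in $S^3-R$ whose caps meet $R$ transversely and are otherwise disjoint from everything. Taking the appropriate number of parallel push-offs of these capped tori and gluing them to the tips of $G$ along the annuli from the previous paragraph adds a third stage of surface to $G$ and then caps off the new tips, yielding a height-$3$ capped annular Grope in $S^3\times[0,1]$ between $K^1_m$ and $R$. The only cap intersections are transverse points on $R$, a boundary component of the first stage of $G$, and hence are permitted by the definition of a capped Grope.

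The step I expect to be the main obstacle is the geometric bookkeeping: one must verify that the attaching annuli, the parallel push-offs of the capped tori, and the Grope concordance $G$ can all be arranged to be mutually disjoint away from the controlled intersections of caps with $R$. This is handled by pushing parallel copies in the $I$-direction and working inside a collar of $S^3\times\{1\}$, with extra care at the infection sites where the tips of $G$ emerge.
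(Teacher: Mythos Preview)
Your proposal is correct and follows essentially the same route as the paper: build a height-$2$ Grope concordance from $K^1_m$ to $R$ by applying Proposition~\ref{heightngropeconcordinfection} at $\alpha'$ and at $\beta'$ using the Grope of Proposition~\ref{grope}, identify the resulting tips with parallel copies of $\alpha'$ and $\beta'$ via the infection gluing, and then attach the disjoint capped tori of Section~\ref{seeds}. The paper carries out the two infections one at a time (first $\alpha'$, producing a Grope $G$ from $K'=R(\alpha',P_m)$ to $R$; then $\beta'$, producing $H$ from $K^1_m$ to $K'$), stacks $G$ and $H$, and handles exactly the bookkeeping you flag as the main obstacle by observing that $G$ is supported in $D^3\times I$ with $\alpha'\subset D^3$ and $\beta'\subset S^3-D^3$, so the $\beta'$-tips of $H$ can be floated past $G$.
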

\begin{proof}
	For simplicity, let $K=K^{1}_{m}$.  $K$ is $R$ infected along $\a'$ and $\b'$ by $P_{m}$.  We will first concentrate on the infection along the $\a'$ curve.  Let $K'$ denote $R$ infected along $\a'$ by $P_{m}$.  We will infect along $\b'$ later, arriving at $K$.
	Consider a diagram for the link $R,\a'$.  The knot $K'$ can be described by replacing the infection curve with the framed $(10+4m)$-component link that defines $P_{m}$.  Figure~\ref{infectionoperation} depicts this operation.
	\begin{figure}[ht!]
		\begin{center}
			\begin{picture}(375, 110)(0, 0)
			\put(56, 4){$\cdots$}
			\put(56, 20){$\cdots$}
			\put(56, 54){$\cdots$}
			\put(12, 0){$\underbrace{\hspace{4.8cm}}$}
			\put(76, -13){$m$}
			\put(13, 65){\small$0$}
			\put(31, 65){\small$0$}
			\put(71, 65){\small$0$}
			\put(91, 65){\small$0$}
			\put(113, 65){\small$0$}
			\put(133, 65){\small$0$}
			\put(163, 65){\small$0$}
			\put(204, 65){\small$0$}
			\put(13, 38){\small$0$}
			\put(34, 38){\small$0$}
			\put(71, 38){\small$0$}
			\put(92, 38){\small$0$}
			\put(113, 38){\small$0$}
			\put(134, 38){\small$0$}
			\put(162, 50){\small$0$}
			\put(205, 50){\small$0$}
			\put(171, 28){\small$0$}
			\put(17, 6){\small$0$}
			\put(181, 10){\small$0$}
			\put(207, 33){\small$0$}
			\put(240, 33){\small$0$}
			\put(241, 65){\small$0$}
			\put(343, 61){$\a'$}
			\put(293, 88){$R$}
			\put(248, 88){$K'$}
			\put(295, 50){\vector(-1,0){40}}
			\put(294, 47){$|$}
			\includegraphics{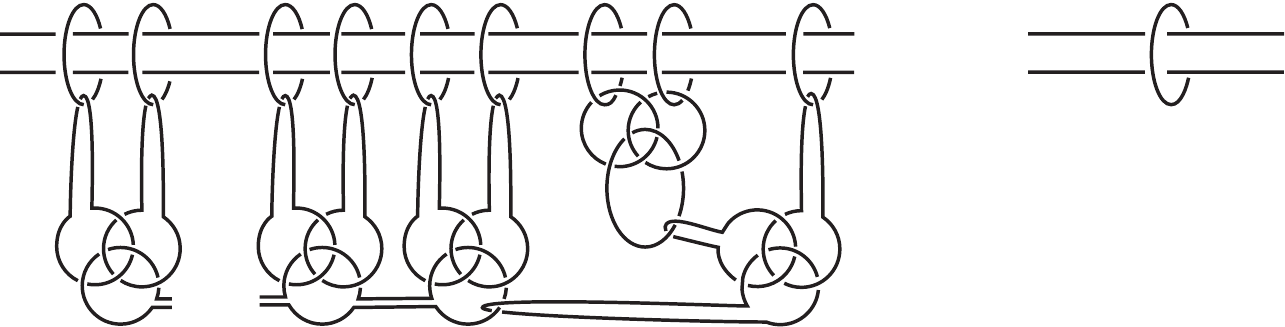}
		  \end{picture}
			\caption{The knot $K$ as infection on $R$}
			\label{infectionoperation}
		\end{center}
	\end{figure}
	
	One may form a new knot $L$ by pulling the $2$ strands of $K'$ out of the upper-right-most $0$-framed curve in Figure~\ref{infectionoperation}.  This is an `$2$-stranded analogue' of what is done in Figure~\ref{gropepicture}.  As in Proposition~\ref{grope}, there is a punctured annulus in $S^{3}$ between $K'$ and $L$ (it is punctured twice).  These punctures are $2$ parallel copies of a meridian of the $0$-framed curve through which we pulled the $2$ strands of $K'$.  Float these punctures downward in the $I$ direction so that each puncture is in a different $S^{3}\times\{\ast\}$ level.  To each puncture, one can glue the height $2$ Grope for $P_{m}$ described in Proposition~\ref{grope}.  As before, the tips of the Grope in the $S^{3}\times\{\ast\}$ level are isotopic to parallel copies of the infection curve $\a'\times\{\ast\}$.  Now, we may float each of these $\a'\times\{\ast\}$ disjointly down to the $S^{3}\times\{1\}$ level.  Since each of the pieces of the Grope (and the tips) miss $K'$ and $L$, we may glue $L\times[0,1]$ to this Grope.  The result is a height $2$ Grope concordance from $K'\times\{0\}$ to $L\times\{1\}$ whose tips are disjointly isotopic to parallel copies of $\a'\times\{1\}$.
	
	We claim that $L$ is in fact $R$.  The proof of this fact is similar to the proof that the $U$ from Figure~\ref{gropepicture} and Proposition~\ref{grope} is the unknot.  Namely, we have expressed $L$ as surgery on separated Hopf links, and for each of these Hopf pairs, $L$ does not pass through one of the components.  Thus, performing the surgery does not change $L$ locally, and away from the surgery link, $L$ is identically $R$.
	
	At this point we have proven $K'=R(\a',P_{m})$ is height $2$ Grope concordant to $R$, and the tips of this Grope concordance (which we name $G$) are isotopic to parallel copies of $\a'$.  We can view $K$ as $K'(\b',P_{m})$.  An argument similar to the one above yields a height $2$ Grope concordance (which we name $H$) between $K$ and $K'$ whose tips are isotopic to parallel copies of $\b'$.  We can glue $H\subset S^{3}\times[0,1]$ to $G\subset S^{3}\times[1,2]$ and float the tips of $H$ down through $S^{3}\times[1,2]$ so that they miss $G$.  In order to justify this, we recall from the proof of Proposition~\ref{heightngropeconcordinfection} that $G$ was constructed by gluing pieces of $R\times I$ (which miss $\b'$) to something that lived in $D^{3}\times I$, where $\a'\subset D^{3}$ and $\b'$ was contained in $S^{3}-D^{3}$.  Thus, $G\cap \left(\b'\times [1,2]\right)=\emptyset$.  We see that $J:=G\cup H\cup\left(\b'\times[1,2]\sqcup\cdots\sqcup\b'\times[1,2]\right)$ is a height $2$ Grope concordance between $K^{1}_{m}$ and $R$ with punctured caps, and these punctures are parallel copies of $\b'$ union parallel copies of $\a'$ such that the punctures lie in $\left(S^{3}-R\right)\times\{2\}$.  The result is a height $2$ Grope concordance between $K=K^{1}_{m}$ and $R$ whose tips are the union of parallel copies of $\a'$ and parallel copies of $\b'$.  A schematic of $J$ is depicted in Figure~\ref{schematicofgrope}, with $n=0$.  Pushing further down into $S^{3}\times[2,3]$, we can glue parallel copies of the capped tori to the copies $\a'$ and $\b'$.  The reader will recall that these capped tori miss $R$.  Since we attached a capped torus to each tip of $J$, we are left with a height $3$ capped Grope concordance between $K^{1}_{m}$ and $R$, as desired.
\end{proof}

\begin{thm}\label{knmingn+2}
	For each $n,m\geq 1$, $K^{n}_{m}$ is height $n+2$ capped Grope concordant to $R=9_{46}$.  In particular, $K^{n}_{m}$ bounds a Grope of height $n+2$ in $D^{4}$.
\end{thm}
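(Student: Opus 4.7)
The plan is to induct on $n \geq 1$, with the base case $n=1$ given by Proposition~\ref{basecase}. For the inductive step, the strategy is to rerun the proof of Proposition~\ref{basecase} with $K^n_m$ playing the role of $P_m$, using an analog of Proposition~\ref{grope} for $K^n_m$.

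To set this up, I would strengthen the statement being inducted: for each $n \geq 0$ (with the convention $K^0_m := P_m$), $K^n_m$ cobounds with the unknot a height $n+2$ annular Grope in $S^3 \times [0,1]$ whose tips are isotopic to disjoint parallel copies of meridians of $K^n_m$. The case $n=0$ is exactly Proposition~\ref{grope}; the cases $n \geq 1$ will both imply the theorem and propagate the induction.

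Given the strengthened statement for $K^n_m$, the inductive step for $K^{n+1}_m = R(\a',\b'; K^n_m)$ proceeds by mimicking Proposition~\ref{basecase}'s proof verbatim, substituting the height $n+2$ annular Grope for $K^n_m$ in place of the height $2$ annular Grope for $P_m$. This produces a height $n+2$ Grope concordance from $K^{n+1}_m$ to $R$ whose tips are isotopic to parallel copies of $\a'$ and $\b'$, using the standard identification that meridians of the infecting knot become copies of the infection curve in the complement of the infected knot. Capping these tips with parallel copies of the capped tori bounded by $\a'$ and $\b'$ (Figure~\ref{curvesboundcappedtori}), which are themselves height $1$ capped Gropes, adds one stage of height and yields a height $n+3$ capped Grope concordance between $K^{n+1}_m$ and $R$. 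The ``in particular'' conclusion follows by gluing in a slice disc for $R = 9_{46}$, which is ribbon. To propagate the strengthened statement to $K^{n+1}_m$, one observes that the new tips are the symplectic basis curves $x_i, y_i$ on the capped tori; these lie disjoint from the infection region, and by the geometry of Figure~\ref{curvesboundcappedtori} they are isotopic to meridians of $R$, hence to meridians of $K^{n+1}_m$ as well.

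The main obstacle will be this bookkeeping: one must track framings carefully so that the assembled $2$-complex is genuinely a Grope with untwisted framing (not merely the image of a map of a grope), and verify at each inductive step that the meridian-tip condition truly persists so that the strengthened statement feeds the next iteration.
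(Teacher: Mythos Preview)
Your inductive strategy mirrors the paper's closely: both induct on $n$, use the capped tori bounded by $\a', \b'$ to add one level of height, and end up with a height $n+3$ capped Grope concordance from $K^{n+1}_m$ to $R$. The difference lies in the inductive hypothesis you carry. The paper carries the statement of the theorem itself (``$K^n_m$ is height $n+2$ capped Grope concordant to $R$''), while you carry the stronger statement ``annular Grope to the unknot with tips isotopic to meridians of $K^n_m$.''

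The gap is in propagating your strengthened hypothesis. After you attach the capped tori, the new top-stage symplectic basis curves are parallel copies of $x_1,x_2,y_1,y_2$. The paper only asserts that these bound disjoint discs meeting $R$ transversely --- not that each disc meets $R$ exactly once. If some cap meets $R$ in several points, the corresponding curve is not isotopic to a meridian of $R$, and your meridian-tip condition fails at the next stage. You correctly flag this as ``the main obstacle,'' but the claim that Figure~\ref{curvesboundcappedtori} exhibits these curves as meridians is precisely what would need to be checked, and the paper's careful wording (``hit the knot $R$ transversely'' rather than ``are meridians'') suggests it is not true in general.

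The paper sidesteps this by never requiring the tips to be meridians. Instead, in the inductive step it punctures each cap at its intersections with $R$; the punctured cap is then a planar surface whose extra boundary circles are genuine meridians of $R$. These meridians float along the Grope concordance to become meridians of $K^n_m$, which under infection are identified with copies of $\a'$ (or $\b'$). Gluing copies of the capped torus for $\a'$ to the punctures of a punctured cap yields a single higher-genus capped surface with one boundary component, so attaching it to each tip legitimately raises the Grope height by one. Thus the paper's weaker hypothesis (capped, with caps merely hitting $R$) propagates cleanly via Proposition~\ref{heightngropeconcordinfection}, whereas your stronger meridian-tip hypothesis does not.
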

\begin{proof}
	We induct on $n$.  The claim is true in the case $n=1$ by Proposition~\ref{basecase}.  Assume that the claim is true for $K^{n}_{m}$.
	
	By definition $K^{n+1}_{m}$ is $R$ infected along $\a'$ and $\b'$ by $K^{n}_{m}$.  As in the proof of Proposition~\ref{basecase}, we first concentrate on the infection along $\a'$.  Let $K'=R(\a',K^{n}_{m})$.  By the inductive hypothesis on $K^{n}_{m}$ and by Proposition~\ref{heightngropeconcordinfection}, $K'$ and $R=R(\a,\mbox{unknot})$ are height $n+2$ Grope concordant.  This Grope concordance is built using the height $n+2$ capped Grope concordance $L$ between $K^{n}_{m}$ and $R$.  Each cap of $L$ hits $R$ transversely, and removing neighborhoods of the intersection points from the cap yields a planar surface with boundary equal to a tip of $L$ and copies of the meridian of $R$.  The meridians of $R$ can float along $L$ to be meridians of $K^{n}_{m}$.  Under the infection of $R$ along $\a'$ by $K^{n}_{m}$, the meridian of $K^{n}_{m}$ is identified with (the longitude of a regular neighborhood of) $\a'$.  Let $G$ denote the union of $L$ and these planar surfaces reaching out from the tips.  By the `punctures of $G$,' we mean the punctures of these punctured caps, each of which is a copy of $\a'$.
	
	As for the infection along $\a'$, between $K^{n+1}_{m}=K'(\b',K^{n}_{m})$ and $K'$ there is a height $n+2$ Grope concordance $H$ with punctured caps whose punctures are copies of $\b'$.  Since infecting along $\b'$ is a local operation near $\b'$, $H$ misses $\a'\times[0,1]$.
	
	We now describe the height $n+3$ capped Grope concordance between $K^{n+1}_{m}$ and $R$.  Figure~\ref{schematicofgrope} shows a schematic of the ultimate Grope.  We start with the height $n+2$ Grope concordance $H$ between $K^{n+1}_{m}$ and $K'$.  Imagine that $H$ is embedded in $S^{3}\times[0,1]$ so that $H\cap S^{3}\times\{0\}=K^{n+1}_{m}$ and $H\cap S^{3}\times\{1\}=K'$.  The punctures of $H$ are parallel copies of $\b'$ in $S^{3}\times\{1\}$.  Now imagine that $G$, which is the Grope concordance between $K'$ and $R$, is imbedded in $S^{3}\times[1,2]$ so that $G\cap S^{3}\times\{1\}=K'$ and $G\cap S^{3}\times\{2\}=R$.  The punctures of $G$ are parallel copies of $\a'$ in $S^{3}\times\{2\}$.  We claim that we can `float down the punctures of $H$' into the $S^{3}\times\{2\}$ level \emph{while missing $G$}.  A justification of this was given in the proof of Proposition~\ref{basecase}.  We see that $J:=G\cup H\cup\left(\b'\times[1,2]\sqcup\cdots\sqcup\b'\times[1,2]\right)$ is a height $n+2$ Grope concordance between $K^{n+1}_{m}$ and $R$ with punctured caps, and these punctures are parallel copies of $\b'$ union parallel copies of $\a'$ such that the punctures lie in $\left(S^{3}-R\right)\times\{2\}$.

	Since $\a'$ and $\b'$ bound disjoint capped torus in $S^{3}-R$, we can glue parallel copies of the capped torus (in $S^{3}\times[2,3]$) to $J$ along the punctures of $J$.  To summarize, we have attached capped surfaces to each tip in the height $n+2$ Grope concordance $J$.  What we have is a height $n+3$ capped Grope concordance between $K^{n+1}_{m}$ and $R$.  We have proven the inductive step, and the claim is true for all $n$.
	
	\begin{figure}[ht!]
		\begin{center}
			\begin{picture}(420, 155)(0,0)
				\put(150, 130){$K^{n+1}_{m}$}
				\put(150, 76){$K'$}
				\put(150, 27){$R$}
				\put(38, -5){copies of $\a'$}
				\put(335, -5){copies of $\b'$}
				\put(275, 140){$\overbrace{\hspace{5cm}}$}
				\put(343, 150){$H$}
				\put(-5, 90){$\overbrace{\hspace{5cm}}$}
				\put(62, 100){$G$}
				\includegraphics{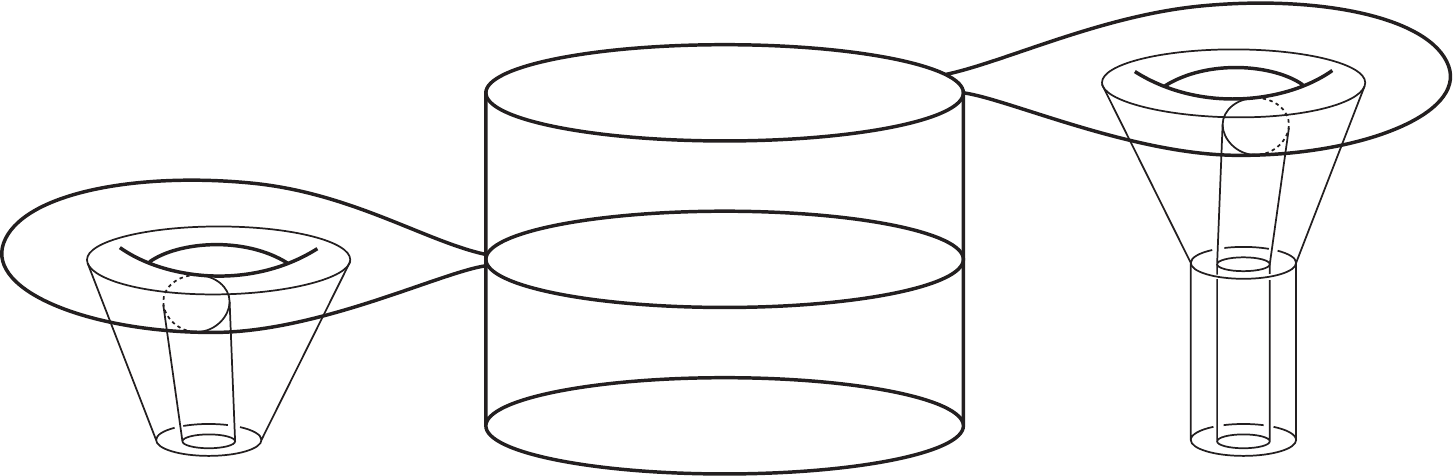}
			\end{picture}
			\caption{A schematic of the height $n+2$ Grope concordance, $J$, from $K^{n+1}_{m}$ to $R$}
			\label{schematicofgrope}
		\end{center}
	\end{figure}
\end{proof}

\section{Algebraic Invariants}
	
	We outline a proof that $K^{n}_{m}$ (or a subsequence of similar knots) are linearly independent in $\G_{n+2}/\G_{n+2.5}$.  The following arguments are adapted from Cochran, Harvey and Leidy's proof that $\F_{n}/\F_{n.5}$ has infinite rank (cf.~\cite[Theorem 8.1]{CHL07b}).
	
\begin{defn}
	Let $K$ be a knot in $S^{3}$ and $G=\p\left(M_{K}\right)$.  The real number $\rho^{1}(K)$  is the first-order $L^{(2)}$-signature given by the Cheeger-Gromov invariant $\rho\left(M_{K},\phi:G\twoheadrightarrow G/G^{(2)}\right)$.
\end{defn}

\begin{thm}\label{mainresult}
	For $n\geq 0$, $\G_{n+2}/\G_{n+2.5}$ has infinite rank.
\end{thm}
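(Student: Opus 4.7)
The plan is to prove the contrapositive: no non-trivial integral linear combination of (a suitable subsequence of) the knots $K^n_m$ from Section~\ref{highgropes} lies in $\G_{n+2.5}$. By Theorem~\ref{knmingn+2} we have $K^n_m\in\G_{n+2}$, and by \cite[Theorem 8.11]{COT03} we have $\G_{n+2.5}\subset\F_{n+0.5}$. So it suffices to show that, for a chosen infinite subsequence $\{m_i\}$, the knots $K^n_{m_i}$ are linearly independent modulo $\F_{n+0.5}$. This is the strategy advertised in the introduction, and it reduces the geometric question about Gropes to an algebraic $(n+0.5)$-solvability question amenable to the Cochran--Harvey--Leidy technique.

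The main tool will be the first-order $L^{(2)}$-signature $\rho^1$. Using Proposition~8.2 of \cite{CHL07b} (whose hypotheses are designed for exactly the kind of iterated infection $K^{n+1}_m=R(\a',\b';K^n_m)$ that we use), I would show that $\rho^1(K^n_m)$ differs from $\rho^1(P_m)$ by a bounded quantity depending only on $R=9_{46}$ and the infection curves. Because the curves $\alpha',\b'$ are homotopic in $S^3-R$ to the curves $\alpha,\b$ used by Cochran--Harvey--Leidy, they represent the same classes in $\p(S^3-R)/\pn{S^3-R}{2}$, so the algebraic hypotheses of CHL's proposition (antipodality, non-triviality in the rational Alexander module, strong coprimality under the Blanchfield form) transfer verbatim. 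A choice of the infecting knots $P_m$ whose classical Levine--Tristram integrals are $\Q$-linearly independent and grow rapidly then forces $\{\rho^1(K^n_{m_i})\}$ to be $\Q$-linearly independent and to dominate the bounded correction terms.

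The heart of the argument is then to assume, for contradiction, that $J=\#_i a_i\,K^n_{m_i}\in\F_{n+0.5}$ with not all $a_i$ zero, and to let $W$ be an $(n+0.5)$-solution for $M_J$. Following Sections~4--7 of \cite{CHL07b}, I would build a tower of mixed-coefficient PTFA (or amenable, Strebel-class) representations $\phi:\p(M_J)\to\Gamma$ that extends over $W$, by showing inductively that the image under $\phi$ of each infection meridian corresponds to a non-trivial commutator in the appropriate derived quotient of $\p(W)$. Once such an extension exists, CHL's sublagrangian argument for $(n+0.5)$-solutions shows that the $L^{(2)}$-signature defect of $W$ with coefficients in $\Gamma$ vanishes. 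On the other hand, by the additivity of $\rho$ under infection (together with vanishing on the seed $R$ modulo an error controlled by its abelian $\rho$-invariants), this $L^{(2)}$-defect equals $\sum_i a_i\rho^1(K^n_{m_i})$ up to a quantity bounded independently of the $a_i$. Combined with the rapid growth from the previous paragraph, this contradicts $\sum_i a_i\rho^1(K^n_{m_i})=0$.

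The expected main obstacle is the inductive construction of the coefficient tower: one must show that at each stage of the infection tower inside $W$, the meridian of the next-stage infection lifts to a non-trivial element of the relevant noncommutative Alexander module of $W$. This is where the geometric infection data must interact cleanly with the derived series of $\p(W)$, and where one typically invokes the injectivity theorem for rationally universally $(n)$-solvable groups from \cite{CHL07b}. Since our infection pattern uses two curves $\alpha',\b'$ at each stage (rather than a single curve), care is required to verify that the two-variable Blanchfield form of $R=9_{46}$ provides the needed non-degeneracy and anti-self-annihilation conditions. Once this single technical point is in place, the rest of the argument is a direct transcription of the CHL proof with $P_m$ playing the role of their seed knots.
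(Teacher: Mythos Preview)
Your global strategy---reduce to $(n.5)$-solvability via $\G_{n+2.5}\subset\F_{n.5}$ and then invoke Cochran--Harvey--Leidy's Proposition~8.2---is exactly what the paper does. However, several of the specific mechanisms you describe are garbled, and as written the argument would not close.

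First, you have the wrong signatures in the wrong places. The invariants that appear at the bottom of the CHL tower are the \emph{zeroth}-order (abelian) signatures $\rho_0(P_m)$, i.e.\ the integrals of the Levine--Tristram signature functions of the infecting knots, not $\rho^1(P_m)$. The first-order signature that matters is $\rho^1(R)$, attached to the \emph{seed} knot $R$, not to $K^n_m$. The paper never computes or bounds $\rho^1(K^n_m)$; instead, property~(3) of Proposition~8.2 directly yields an identity of the shape
\[
(\epsilon+C_1)\,\rho_0(P_1)+\sum_{m>1}C_m\,\rho_0(P_m)=C_0\,\rho^1(R),\qquad \epsilon+C_1\geq 1,\ C_0\in\{0,1\},
\]
from the boundary of the top manifold $W_n$ with $\phi:\pi\to\pi/\pi^{(n+1)}_r$. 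The contradiction then comes from having chosen the $P_m$ so that no nontrivial rational combination of the $\rho_0(P_m)$ equals a rational multiple of $\rho^1(R)$ (Lemma~\ref{indepsigs}). There is no ``bounded correction term'' to be dominated by ``rapid growth''; the argument is an exact $\Q$-linear independence statement, and your growth/boundedness framing, while a legitimate strategy in other settings, is neither needed nor what is proved here.

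Second, you miss a point the paper handles explicitly: one must know $\rho^1(R)$ well enough to set up the independence hypothesis. If $\rho^1(9_{46})=0$ this is easy; if not, the paper replaces $9_{46}$ by the ribbon knot obtained by tying one band into a trefoil (Figure~\ref{addtrefoil}) so that $\rho^1(R)\neq 0$ is known, and then verifies that all of Section~\ref{highgropes} (the Grope constructions) goes through unchanged for this modified $R$. Your proposal fixes $R=9_{46}$ throughout and does not address this. Finally, the ``expected main obstacle'' you flag---lifting meridians nontrivially at each stage---is precisely what is packaged into property~(2) of Proposition~8.2, together with the analysis of the four submodules of the Alexander module of $R$ (since $\Delta_R(t)=(2t-1)(t-2)$); you should cite that rather than re-derive it from Blanchfield non-degeneracy.
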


We produce an infinite family of knots in $\G_{n+2}$ whose members are linearly independent in $\G_{n+2}/\G_{n+2.5}$.  This family is a subsequence of $\{K^{n}_{m}\}_{m=1}^{\infty}$, or possibly a subsequence of a related family (see the proof below for a discussion).  To avoid unnecessary reproduction of previous results, we will give a brief outline of the proof and refer the reader to~\cite[Theorem 8.1]{CHL07b} for the details.  We merely push through the proof given by Cochran, Harvey and Leidy in~\cite{CHL07b}, just to verify that their argument may be used to show that no non-trivial linear combination of the $K^{n}_{m}$ lie in $\F_{n.5}$, hence not in $\G_{n+2.5}$.

\begin{proof}[Sketch of proof]
	First, we must find a genus one ribbon knot $R$ with $\rho^{1}(R)\neq 0$.  If $\rho^{1}(9_{46})\neq 0$, we can set $R=9_{46}$.  Otherwise, we can tie one band of $9_{46}$ into the right-handed trefoil knot, as shown in Figure~\ref{addtrefoil}; the result, $R$, is a ribbon knot with $\rho^{1}(R)\neq 0$.
	
	\begin{figure}[ht!]
		\begin{center}
			\begin{picture}(240, 200)(0, 0)
			\put(28, 58){\small$\alpha'$}
			\put(38, 119){\small$\beta'$}
			\put(43, 129){\vector(1,3){4}}
			\includegraphics[scale=.5]{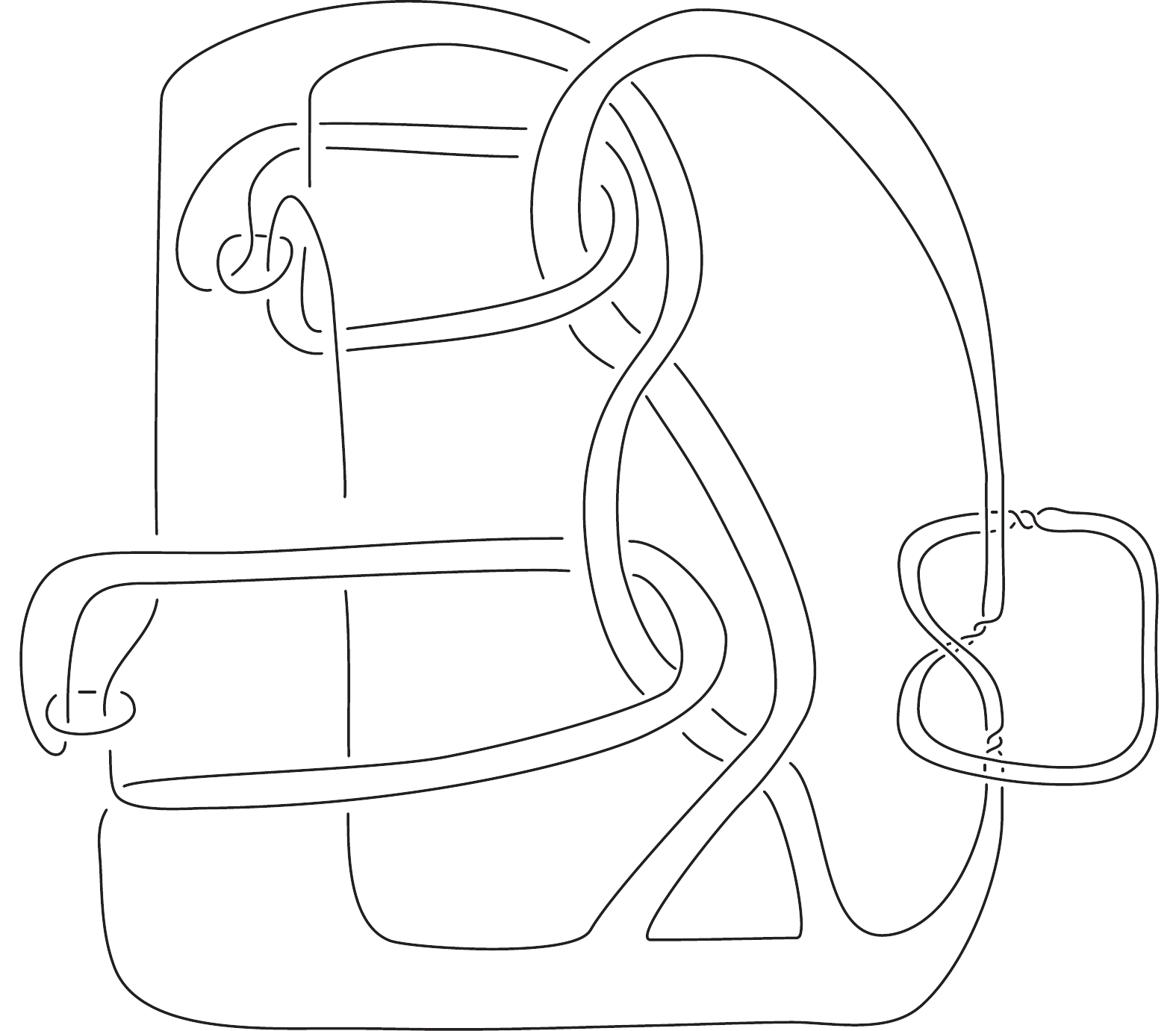}
			\end{picture}
			\caption{Tying one band of $9_{46}$ into a trefoil}
			\label{addtrefoil}
		\end{center}
	\end{figure}
	
	Second, we must find an infinite set $\mathcal{P}$ of knots $P_{m}$ such that no non-trivial rational linear combination of $\left\{\rho_{0}\left(P_{m}\right)\right\}$ is a rational multiple of $\rho^{1}(R)$.  These $\rho_{0}$ invariants are discussed in Lemma~\ref{indepsigs}.  Lemma~\ref{indepsigs} provides an infinite subset of the knots $P_{m}$ defined in Section~\ref{infectingknots} that satisfy this property.
	
	For each $n\geq 0$, we define a family of knots $\left\{K^{n}_{m}: m\geq 1\right\}\subset\G_{n+2}$.  This family is defined as in Section~\ref{ourexamples}, except using the knot $R$ as the seed knot.  It should be noted that (possibly) having tied a trefoil in one of the bands does not affect the conclusions of any of the results in Section~\ref{highgropes}.  In particular, these $K^{n}_{m}$ lie in $\G_{n+2}$.
	
	Now we must prove that no non-trivial linear combination of the knots $K^{n}_{m}$ is (even rationally) $(n.5)$-solvable.  Since $\G_{n+2.5}\subset\F_{n.5}$, it will follow that no such combination of these knots lies in $\G_{n+2.5}$, hence these knots are linearly independent in $\G_{n+2}/\G_{n+2.5}$.  Suppose that a finite sum $\displaystyle\widetilde{K}=\#_{m=1}^{\infty} j_{m}\,K^{n}_{m}$ is rationally $(n.5)$-solvable.  We may assume that $j_{1}\geq 0$.  We construct a faimly of $4$-manifolds $W_{i}$ and reach a contradiction.  Let us abbreviate $M_{K^{n}_{m}}=M^{n}_{m}$.
	
	Consider $W_{n}$ with boundary $M^{0}_{1}\sqcup M^{0}_{1}\sqcup M_{R}$.  Recall that $M^{0}_{1}=M_{K^{0}_{1}}=M_{P_{1}}$.  Let $\pi=\p\left(W_{n}\right)$ and consider $\phi:\pi\to \pi/\pi^{(n+1)}_{r}$.  Then by property (3) of Proposition~\ref{chlprop}, 
	
	\begin{equation}\label{theequation}
		\rho\left(M_{P_{1}},\phi_{\a'}\right)+\rho\left(M_{P_{1}},\phi_{\b'}\right)+\rho\left(M_{R},\phi_{R}\right)=\rho\left(\del W_{n},\phi\right)=-\sum_{m}C_{m}\,\rho_{0}\left(P_{m}\right)
	\end{equation}
	where $C_{1}\geq 0$ and $\phi_{R}:\p(M_{R})\to\pi/\pi^{(n+1)}_{r}$ is induced by inclusion (and similarly for $\phi_{\a'}$ and $\phi_{\b'}$).  By property (2) of Proposition\ref{chlprop}, $j_{\ast}\left(\p\left(M_{P_{1}}\right)\right)\subset \pi^{(n)}$, implying that the restrictions $\phi_{\a'}$ and $\phi_{\b'}$ factor through the respective abelianizations.  Additionally by property (2), at least one of these coefficient systems is non-trivial.  Hence, by Proposition~\ref{chlprop}, $$\rho\left(M_{P_{1}},\phi_{\a'}\right)+\rho\left(M_{P_{1}},\phi_{\b'}\right)=\epsilon\,\rho_{0}\left(P_{1}\right)$$ where $\epsilon$ is equal to one or two.  Thus we can simplify equation~\ref{theequation} to yield
	
	\begin{equation}\label{theequation2}
		\left(\epsilon+C_{1}\right)\rho_{0}\left(P_{1}\right)+\sum_{m>1}C_{m}\,\rho_{0}\left(P_{m}\right)=-\rho\left(M_{R},\phi_{R}\right)
	\end{equation}
	where $\epsilon+C_{1}\geq 1$.  Also by property (2) of Proposition~\ref{chlprop}, $$j_{\ast}\left(\p\left(M_{R}\right)\right)\subset \pi^{(n-1)}$$
	so $\phi_{R}$ factors through $G/G^{(2)}$ where $G=\pi\left(M_{R}\right)$.  Since $\ker(\phi_{R})\subset G^{(1)}$, $\phi_{R}$ is determined by the kernel, denoted $A$, of the induced map $$\overline{\phi_{R}}: G^{(1)}/G^{(2)}\to \textrm{im}(\phi_{R})$$  $A$ is a submodule of the Alexander module $G^{(1)}/G^{(2)}$.  Since the Alexander polynomial of $R$ is $(2t-1)(t-2)$, the product of two irreducible coprime factors, the Alexander module of $R$ has only four submodules: the trivial submodule, $A$ itself, and $\langle\alpha'\rangle$ and $\langle\b'\rangle$.  One can examine these four possibilities for $A$ and conclude that $\rho\left(M_{R},\phi_{R}\right)\in\left\{0,\rho^{1}(R)\right\}$.  Combining this with equation~\ref{theequation2}, we have $$\left(\epsilon+C_{1}\right)\rho_{0}\left(P_{1}\right)+\sum_{m>1}C_{m}\,\rho_{0}\left(P_{m}\right)=C_{0}\,\rho^{1}(R)$$ where $C_{0}\in\{0,1\}$.  Since $\epsilon+C_{1}\geq 1$, we have written a multiple of $\rho^{1}(R)$ as a non-trivial linear combination of $\left\{\rho_{0}\left(P_{m}\right)\right\}$, a contradiction.
\end{proof}

\begin{prop}\label{chlprop}(Proposition 8.2 of~\cite{CHL07b}).
		If $\widetilde K$ is rationally $(n.5)$-solvable, then for each $0\leq i\leq n$, there is a $4$-manifold $W_{i}$ with the following properties.  Let $\pi=\p(W_{i})$.
		\begin{enumerate}
			\item $W_{i}$ is a rational $(n)$-bordism where, for $i<n$, $\del W_{i}=M^{n}_{1}$ and $\del W_{n}=M^{0}_{1}\sqcup M^{0}_{1}\sqcup M_{R}$,
			\item under the inclusion(s) $j:M^{n-i}_{1}\subset\del W_{i}\to W_{i}$, $$j_{\ast}:\p\left(M^{n-i}_{1}\right)\subset \pi^{(i)},$$
			and for each $i$ (at least one of the copies of) $M^{n-i}_{1}\subset\del W_{i}$, $$j_{\ast}:\left(\p\left(M^{n-i}_{1}\right)\right)\cong\Z\subset \pi^{(i)}/\pi^{(i+1)}_{r}$$
			and under the inclusion $j:M_{R}\subset \del W_{n}\to W_{n}$, $$j_{\ast}:\left(\p\left(M_{R}\right)\right)\cong\Z\subset\pi^{(n-1)}/\pi^{(n)}_{r}$$
			\item for any PTFA coefficient system $\phi:\p\left(W_{i}\right)\to\Gamma$ with $\Gamma^{(n+1)}_{r}=1$, $$\rho\left(\del W_{i},\phi\right)=\sigma^{(2)}_{\Gamma}\left(W_{i,\phi}\right)-\sigma\left(W_{i}\right)=-\sum_{m}C_{m}\,\rho_{0}\left(P_{m}\right)$$
			for some integers $C_{m}$ (depending on $\phi$) with $C_{1}\geq 0$.
		\end{enumerate}
\end{prop}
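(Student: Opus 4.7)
My plan is to adapt the inductive construction of \cite[Proposition 8.2]{CHL07b} to the seed knot $R$ and infecting family $\{P_m\}$ used here. Each $W_i$ will be assembled from three standard building blocks: a rational $(n.5)$-solution $V$ for $\widetilde K$, the trace cobordism that splits $\widetilde K = \#_m j_m K^n_m$ into its summands, and the ``infection cobordisms'' that undo one layer of the recursion $K^j_m = R(\a',K^{j-1}_m)\#(\text{infection along }\b')$.

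First, I would start with the rational $(n.5)$-solution $V$ with $\del V = M_{\widetilde K}$, attach the standard cobordism from connected-sum surgery to disjoint $0$-surgeries, and then cap off all but one copy of $M^n_1$ using rational $(n)$-solutions for the remaining $M^n_m$, which exist because $K^n_m\in\G_{n+2}\subset\F_n$ by Theorem~\ref{knmingn+2}. The result is $W_0$, a rational $(n)$-bordism with $\del W_0=M^n_1$. To pass from $W_{i-1}$ to $W_i$, I would glue, along the distinguished boundary component $M^{n-i+1}_1$, two infection cobordisms (one each for $\a'$ and $\b'$), which introduce a new boundary $M_R\sqcup M^{n-i}_1\sqcup M^{n-i}_1$; then I would cap off $M_R$ using a rational slice-disc exterior (available because $R$ is slice) and one of the $M^{n-i}_1$ copies using its rational $(n)$-solution. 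At the last step $i=n$ I simply omit these cap-offs so that $\del W_n = M^0_1\sqcup M^0_1 \sqcup M_R$, matching the prescribed boundary.

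Property (2) then follows by tracking derived-series depth: since $\a',\b'\in \p(S^3-R)^{(1)}$, each infection cobordism pushes its inner boundary one derived step deeper into $\p$ of the outer piece, and iterating $i$ times places the inclusion of $M^{n-i}_1$ into $\pi^{(i)}$. Rational non-triviality of the image in $\pi^{(i)}/\pi^{(i+1)}_r$ comes from the fact that $\a'$ and $\b'$ are rationally nontrivial in the Alexander module of $R$ (whose Alexander polynomial $(2t-1)(t-2)$ rules out their killing any meridional class in any rational $(n)$-solution used as a cap). The analogous statement for $M_R$ in $\del W_n$ is one derived step shallower.

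Property (3) is the Cheeger--Gromov--Weinberger $L^2$-signature-defect formula $\rho(\del W_i,\phi) = \sigma^{(2)}_\Gamma(W_{i,\phi}) - \sigma(W_i)$, combined with additivity of $L^2$-signatures over the cap-offs. The slice-disc exteriors for $M_R$ contribute zero because the restriction of $\phi$ to $\p(M_R)$ factors through an abelian group that bounds; each capped rational $(n)$-solution for $M^n_m$ contributes $\pm\rho_0(P_m)$ with multiplicity given by $|j_m|$, producing the linear combination $-\sum_m C_m\,\rho_0(P_m)$ with $C_1\ge 0$ coming from the orientation assumption $j_1\ge 0$. The main obstacle, and where I expect to spend most of the effort, is the bookkeeping of orientations and multiplicities required to pin down the sign $C_1\ge 0$ and to justify that no additional contributions appear from the intermediate cap-offs; these points are handled exactly as in \cite{CHL07b}, which we do not reproduce here.
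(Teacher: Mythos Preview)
Your outline is a faithful sketch of the Cochran--Harvey--Leidy construction from \cite[Proposition~8.2]{CHL07b}. The paper itself does not reproduce any argument for this proposition---it simply refers the reader to \cite{CHL07b}---so there is nothing further to compare, and your plan is consistent with that cited source.
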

	
	We refer the reader to~\cite{CHL07b} for a proof of this proposition.

\section{String links}

	We follow the notation of~\cite{sH08}.  Let $\C(m)$ denote the group of $m$-component string links modulo concordance (the group operation is concatenation).  If $L\in\C(m)$, let $\widehat L$ denote the closure of $L$; $\widehat L$ is obtained by attaching the $m$-component trivial link to the ends of $L$ in the obvious way.  Let $\G^{m}_{n}$ denote the subset of $\C(m)$ defined by $L\in\G^{m}_{n}$ if the components of $\widehat L$ bound disjoint Gropes of height $n$ in $D^{4}$.  It can be shown that the Grope filtration of $\C(m)$
	$$0\subset\cdots\subset\G^{m}_{n.5}\subset \G^{m}_{n}\subset\cdots\subset\G^{m}_{1.5}\subset\G^{m}_{1}\subset\C(m)$$
	is a filtration of $\C(m)$ by normal subgroups.  If $\B(m)$ denotes the sugroup of $\C(m)$ consisting of boundary string links, we define $\BG^{m}_{n}=\G^{m}_{n}\cap\B(m)$.
	
	There is also the $(n)$-solvable filtration of $\C(m)$, whose definition we omit, that is related to the Grope filtration by $\G^{m}_{n+2}\subset\F^{m}_{n}$ for all $n\in\frac{1}{2}\N$.  Let $\BF^{m}_{n}=\F^{m}_{n}\cap \B(m)$.
	
	\begin{rem}\label{salfeat}
		For each $n\in\Z$, Harvey~\cite[Definition 3.11]{sH08} defined a link invariant $\rho_{n}(L)\in\R$.  The salient features of this $\rho_{n}$ are listed below:
		\begin{enumerate}
			\item~\cite[Corollary 6.7]{sH08}  For each $n\geq 0$ and $m\geq 1$, $\rho_{n}:\B(m)\to\R$ is a homomorphism,
			\item~\cite[Theorem 4.4]{CH08} $\rho_{n}(L)$ vanishes for all $L\in\BF^{m}_{n.5}$.
			\item~\cite[Theorem 5.8]{sH08} If $T$ is the identity element of $\B(m)$, $\eta$ an unknotted circle in the complement of $T$ that lies in $\pi_{1}\left(D^{2}\times I - T\right)^{(n)}-\pi_{1}\left(D^{2}\times I - T\right)^{(n+1)}$, and $T(\eta,K)$ denotes $T$ infected along $\eta$ by the knot $K$, then $\rho_{n}(T(\eta,K))=\rho_{0}(K)$, where $\rho_{0}(K)$ is the classical signature discussed in Lemma~\ref{indepsigs}.
		\end{enumerate}
	\end{rem}
	
	Harvey proved that the Grope filtration of the string link concordance group is a non-trivial filtration.
	
	\begin{thm}[6.13 of~\cite{sH08} and 4.4 of~\cite{CH08}]
		For each $n\geq 1$ and $m\geq 2$, the abelianization of $\BG^{m}_{n}/\BG^{m}_{n+1.5}$ has infinite rank; hence $\BG^{m}_{n}/\BG^{m}_{n+1.5}$ is an infinitely generated subgroup of $\G^{m}_{n}/\G^{m}_{n+1.5}$.
	\end{thm}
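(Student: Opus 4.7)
The plan is to detect non-triviality in the abelianization of $\BG^{m}_{n}/\BG^{m}_{n+1.5}$ via Harvey's $\rho$-invariant $\rho_{n-1}$. By Remark~\ref{salfeat}(1), $\rho_{n-1}:\B(m)\to\R$ is a homomorphism. Applying the containment $\G^{m}_{k+2}\subset\F^{m}_{k}$ with $k=n-1$ (in its half-integer form) yields $\BG^{m}_{n+1.5}\subset\BF^{m}_{(n-1).5}$, and Remark~\ref{salfeat}(2) then says $\rho_{n-1}$ vanishes on $\BG^{m}_{n+1.5}$. Consequently $\rho_{n-1}$ descends to a homomorphism from the abelianization of $\BG^{m}_{n}/\BG^{m}_{n+1.5}$ to $\R$, which will serve as the detection tool.

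Next I would produce an infinite family $\{L_{j}\}\subset\BG^{m}_{n}$ with $\Q$-linearly independent $\rho_{n-1}(L_{j})$. Let $T$ denote the identity $m$-string link; since $m\geq 2$, $\p(D^{2}\times I-T)$ is free of rank $m$, and iterated Bing-doubling produces an unknotted simple closed curve $\eta\subset D^{2}\times I-T$ lying in $\pn{D^{2}\times I-T}{n-1}\setminus\pn{D^{2}\times I-T}{n}$ that bounds a smoothly embedded grope of height $n-1$ in the complement. Choose knots $K_{j}$ whose integrated classical signatures $\rho_{0}(K_{j})$ are $\Q$-linearly independent in $\R$ (cf.\ Lemma~\ref{indepsigs}) and set $L_{j}:=T(\eta,K_{j})$. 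Then by Remark~\ref{salfeat}(3), $\rho_{n-1}(L_{j})=\rho_{0}(K_{j})$, so these values are $\Q$-linearly independent.

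The crucial geometric step is verifying $L_{j}\in\BG^{m}_{n}$. Take the disjoint flat discs bounded by $\widehat T$ and surger each transverse intersection with $\eta$ by gluing in a parallel pushoff of a Seifert surface for $K_{j}$; the resulting disjoint surfaces $\Sigma_{i}$ exhibit $\widehat{L_{j}}$ as a boundary link. A symplectic basis of each $\Sigma_{i}$ can be chosen whose curves lie on the boundary of the infection tube and hence represent parallel copies of $\eta$ in the complement of $\widehat{L_{j}}$; these remain in the $(n-1)$-st derived subgroup. Capping each basis curve by a disjoint parallel pushoff of the height-$(n-1)$ grope bounded by $\eta$ and pushing slightly into $D^{4}$ yields disjoint smoothly embedded height-$n$ Gropes bounded by the components of $\widehat{L_{j}}$, with their untwisted framings.

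Once $L_{j}\in\BG^{m}_{n}$ is established, any relation $\sum_{j}c_{j}[L_{j}]=0$ in the abelianization produces $\sum_{j}c_{j}\rho_{0}(K_{j})=0$, forcing all $c_{j}=0$, so the $[L_{j}]$ are $\Z$-linearly independent and the abelianization has infinite rank. The main obstacle is exactly the previous paragraph: one must simultaneously arrange disjointness of the surgered Seifert surfaces, of their capping gropes, and of all parallel pushoffs, while preserving zero framings on each component of $\widehat{L_{j}}$; additional care is required to verify that the chosen basis curves actually remain at the correct depth in the derived series of $\p(D^{2}\times I-L_{j})$ after the infection is performed.
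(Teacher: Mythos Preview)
This theorem is not proved in the paper---it is quoted as a known result from Harvey and Cochran--Harvey. The closest thing the paper contains is the proof of Theorem~\ref{secondaryresult}, which follows the same template (Harvey's $\rho$-invariants plus an infection construction) but with the infecting knots $P_k\in\G_2$ replacing arbitrary knots so as to narrow the index gap. Your overall architecture---use $\rho_{n-1}$, invoke $\G^{m}_{n+1.5}\subset\F^{m}_{(n-1).5}$ for vanishing, and infect the trivial string link along a curve $\eta\in F^{(n-1)}\setminus F^{(n)}$---is exactly the Harvey/Cochran--Harvey strategy and matches how the paper handles the improved version.

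There is, however, a genuine error in your third paragraph. After surgering the flat discs $D_i$ along their intersections with $\eta$ by gluing in Seifert surfaces for $K_j$, the symplectic basis of the resulting $\Sigma_i$ consists of the basis curves of those Seifert surfaces, sitting inside the glued-in knot exterior $E_{K_j}$. These are \emph{not} parallel copies of $\eta$ on the boundary torus of the infection region; they are curves in the commutator subgroup of $\p(E_{K_j})$, and there is no reason they bound embedded height-$(n-1)$ gropes in the link complement. So the argument ``cap each basis curve by a parallel pushoff of the height-$(n-1)$ grope for $\eta$'' does not go through as written.

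The repair is the mechanism used in the paper's Section~\ref{highgropes} and in~\cite{CT07}: one first builds a height-$1$ annular Grope concordance (a genus cobordism) from $\widehat{T(\eta,K_j)}$ to $\widehat{T}$ whose \emph{tips} are isotopic to parallel copies of $\eta$, and only then caps those tips with the height-$(n-1)$ gropes that $\eta$ bounds in the complement of $\widehat{T}$. This requires arranging the cobordism so that its symplectic basis curves are meridians of the infection region (hence longitudes of $\eta$), which is a different geometric picture from surgering slice discs with Seifert surfaces. You correctly flag in your final paragraph that this is where the real work lies; the specific description you gave does not accomplish it.
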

	
	With our knots $P_{m}\in\G_{2}$, we are able to close the index gap.
	
	\begin{thm}\label{secondaryresult}
		For each $n\geq 2$ and $m\geq 2$, the abelianization of $\BG^{m}_{n}/\BG^{m}_{n+.5}$ has infinite rank; hence $\BG^{m}_{n}/\BG^{m}_{n.5}$ is an infinitely generated subgroup of $\G^{m}_{n}/\G^{m}_{n+.5}$.
	\end{thm}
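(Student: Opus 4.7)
The plan is to adapt Harvey's proof of the non-triviality of the string-link Grope filtration by replacing the arbitrary infecting knots she uses with the knots $P_m\in\G_2$ from Proposition~\ref{grope}. Because each $P_m$ bounds a height $2$ Grope in $D^4$ rather than merely a Seifert surface (a height $1$ grope), Harvey's use of an infection curve at depth $n-1$ infected by a generic knot can be replaced by an infection curve one level deeper (at depth $n-2$) infected by a $P_m$. This trades one level of the derived series for one level of the bounding Grope, gaining the half-integer needed to close the gap from $n+1.5$ to $n.5$.

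Concretely, fix $n\ge 2$ and $m\ge 2$. Let $T\in\B(m)$ be the identity. Following~\cite{sH08}, I would choose an unknotted circle $\eta\subset D^2\times I-T$ representing an element of $\pi_1(D^2\times I-T)^{(n-2)}\setminus\pi_1(D^2\times I-T)^{(n-1)}$, picked compatibly with the disjoint disc system for $T$ so that $T(\eta,K)$ remains a boundary string link for every knot $K$ (Harvey selects $\eta$ precisely so that the trivial disc system can be modified, disjointly on each component, by grafting in copies of a Seifert surface for $K$). For each $k\ge 1$, set $L_k:=T(\eta,P_k)\in\B(m)$.

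The first step is to show $L_k\in\BG^m_n$. Since $\eta$ lies in the $(n-2)$-nd term of the derived series of the complement's fundamental group, $\eta$ bounds a grope of height $n-2$ in $D^2\times I-T$; standard satellite arguments as in Section~\ref{highgropes} arrange this into an embedded capped version disjoint from the disc system for $T$. Proposition~\ref{grope} supplies a height $2$ annular Grope from $P_k$ to the unknot. Grafting copies of this height $2$ Grope at the tips of the height $n-2$ grope structure for $\eta$, along the lines of Proposition~\ref{basecase} and Theorem~\ref{knmingn+2}, and then capping with the trivial discs for the components of $\widehat{T}$, produces a disjoint system of height $n$ Gropes in $D^4$ bounded by the components of $\widehat{L_k}$. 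Hence $L_k\in\BG^m_n$.

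The second step is to detect $\Q$-linear independence using Harvey's $\rho_{n-2}$ homomorphism. By Remark~\ref{salfeat}(3), $\rho_{n-2}(L_k)=\rho_0(P_k)$, and by Lemma~\ref{indepsigs} an infinite subsequence of the $P_k$ has $\Q$-linearly independent values $\rho_0(P_k)$. Because $\G^m_{j+2}\subset\F^m_j$ for all $j\in\tfrac12\N$, we have $\G^m_{n.5}\subset\F^m_{n-1.5}=\F^m_{(n-2).5}$, so $\BG^m_{n.5}\subset\BF^m_{(n-2).5}$, on which $\rho_{n-2}$ vanishes by Remark~\ref{salfeat}(2). Combined with the homomorphism property from Remark~\ref{salfeat}(1), $\rho_{n-2}$ descends to a homomorphism on the abelianization of $\BG^m_n/\BG^m_{n.5}$, and the linear independence of its values on the chosen subsequence of $\{L_k\}$ gives the first assertion. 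The second assertion (infinite generation of a subgroup of $\G^m_n/\G^m_{n.5}$) is then immediate, since any abelian group of infinite rank contains an infinitely generated subgroup.

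The main obstacle is the first step: the Grope-height bookkeeping needed to confirm that the infected string links $L_k$ have components bounding \emph{disjoint} height $n$ Gropes in $D^4$ while simultaneously remaining boundary string links. This requires carefully combining the height $n-2$ grope structure for $\eta$, the height $2$ Grope concordance for $P_k$, and the disjoint Seifert disc system for $\widehat{T}$, across all $m$ components at once. It generalizes the single-component constructions of Section~\ref{highgropes} to the multi-component string-link setting and essentially mirrors the framework of~\cite{sH08}, which is why importing Harvey's original construction, rather than re-deriving it, is the cleanest route.
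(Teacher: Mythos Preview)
Your approach is essentially the paper's own proof, just reindexed: where you work directly at level $n$ with $\eta\in F^{(n-2)}\setminus F^{(n-1)}$ and detect using $\rho_{n-2}$, the paper shifts the index, proving the statement for $n+2$ with $\eta\in F^{(n)}\setminus F^{(n+1)}$, then invokes~\cite[Lemma 3.9]{CT07} to realize $\eta$ by a curve bounding an embedded height~$n$ Grope in $D^{2}\times I-T$ and detects with the corresponding $\rho$-invariant. Two small points are worth cleaning up. First, your Grope construction is phrased backwards: one does not graft the height~$2$ Grope for $P_{k}$ to the tips of the height~$(n-2)$ Grope for $\eta$; rather, infection by $P_{k}$ produces a height~$2$ annular Grope concordance from $\widehat{L_{k}}$ to $\widehat{T}$ whose tips (by Proposition~\ref{grope}) are meridians of $P_{k}$, which become parallel copies of $\eta$, and \emph{those} are then capped by copies of the height~$(n-2)$ Grope for $\eta$, after which one glues on the slice discs for $\widehat{T}$---this is the pattern of Proposition~\ref{basecase} and Theorem~\ref{knmingn+2}. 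Second, the paper treats $n=2$ separately, taking $\widehat{L_{k}}$ to be the split union of $P_{k}$ with an $(m-1)$-component unlink; in your indexing this corresponds to choosing $\eta$ to be a meridian of one component, which is needed because for $\eta\in F^{(0)}\setminus F^{(1)}$ the generic argument that $\eta\in F^{(1)}$ misses a Seifert system for $T$ (hence $T(\eta,K)$ is a boundary link) does not apply.
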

	\begin{proof}
		For $n=2$, let $L_{k}$ denote the $m$-component boundary string link whose closure is the union of the $(m-1)$-component trivial link and the knot $P_{k}$ as defined in Section~\ref{infectingknots}.  Since $P_{k}\in\G_{2}$, the components of the closure of $L_{k}$ bound disjoint Gropes of height $2$, $m-1$ of which are discs.  Thus $L_{k}\in\BG^{m}_{2}$.  The linear independence of the $L_{k}$ in $\BG^{m}_{2}/\BG^{m}_{2.5}$ follows from the argument below when $n\geq 3$.
		
		Assume $n\geq 1$.  We prove the statement for $n+2$.   Let $F$ denote the free group on $m$ generators, and $T$ be the trivial $m$-component string link, so that $F\cong\pi_{1}\left(D^{2}\times I - T\right)$.  Pick an element $[\eta]\in F^{(n)}-F^{(n+1)}$.  By~\cite[Lemma 3.9]{CT07}, we may pick an embedded curve $\eta$ in $D^{2}\times I - T$ that bounds a disk in $D^{2}\times I$ and that bounds a Grope of height $n$ in $D^{2}\times I - T$.  Let $S_{\eta}=\{T(\eta,P_{k}):\ P_{k}\mbox{ as defined in Section~\ref{infectingknots}}\}$ where $T(\eta,K)$ is defined by item 3 of Remark~\ref{salfeat}.  Since $\eta\in F^{(1)}$, there are disjoint surfaces bounded by the components of $T$ that miss the curve $\eta$.  Thus each $L\in S_{\eta}$ is a boundary string link.  Since $P_{k}\in\G_{2}$, we have that if $L\in S_{\eta}$, then $\widehat L\in\G^{m}_{n+2}$.  Thus, $S_{\eta}\subset\BG^{m}_{n+2}$.
		
		By items 1 and 2 of Remark~\ref{salfeat}, $\rho_{n+2}:\BG^{m}_{n+2}/\BG^{m}_{n+2.5}\to\R$ is a well-defined homomorphism.  By item 3 of Remark~\ref{salfeat}, the image of $S_{\eta}$ under this homomorphism is $\rho_{n+2}\left(S_{\eta}\right)=\{\rho_{0}(P_{k})\}$, which contains a $\Q$-linearly independent subset of $\R$ by Lemma~\ref{indepsigs}.  The conclusion follows.
	\end{proof}
		
\section{Caclulation of $\rho$-invariants}

	We now calculate the classical signatures of the $P_{m}$ and discuss why they are linearly independent.

\begin{lem}\label{sigcalc}
	The $0$-th-order signature of $P_{m}$ is given by $\displaystyle\rho_{0}\left(P_{m}\right)=2-2\,\frac{\theta_{m}}{\pi}$ where $\theta_{m}$ is the number satisfying $\displaystyle\cos\left(\theta_{m}\right)=\frac{2\sqrt[3]{m}-1}{2\sqrt[3]{m}}$ and $0<\theta_{m}<\pi$.
\end{lem}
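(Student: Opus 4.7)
The plan is to realize $P_m$ as an explicit knot in $S^3$, produce a Seifert matrix for it, compute its Levine--Tristram signature function, and integrate over the unit circle; recall that $\rho_0(P_m) = \int_{S^1}\sigma_\omega(P_m)\,d\omega$ with respect to normalized Lebesgue measure on $S^1$.

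First I would simplify the surgery diagram of Figure~\ref{Pm} using Kirby calculus. The Hopf-pair cancellations performed in the proof of Proposition~\ref{grope} already dispose of the ``outer'' framed components; one can continue in the same spirit, cancelling the remaining Hopf pairs via handle-slides of $P_m$ across appropriate components, until $P_m$ is realized as an honest knot diagram in $S^3$. Because of the iterative ``$m$ copies of a common block'' structure visible in Figure~\ref{Pm}, the resulting knot should admit a Seifert surface of genus bounded independently of $m$, with a Seifert matrix $V_m$ whose entries depend on $m$ in a controlled way. The appearance of $\sqrt[3]{m}$ in the answer suggests that the relevant block of $V_m$ contributes a factor of the form $1-m\,(1-\omega)^3(1-\bar\omega)^3$ in $\det\bigl((1-\omega)V_m+(1-\bar\omega)V_m^T\bigr)$; setting this to zero gives $(1-\omega)(1-\bar\omega)=m^{-1/3}$, which rearranges to $\cos\theta_m = (2\sqrt[3]{m}-1)/(2\sqrt[3]{m})$ exactly as stated.

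Next, with $V_m$ in hand, I would study the Hermitian form $B_\omega = (1-\omega)V_m + (1-\bar\omega)V_m^T$ on $S^1\setminus\{1\}$. Its signature $\sigma_\omega(P_m)$ is locally constant, jumping by $\pm 2$ only at simple roots of $\det B_\omega$ on $S^1$. I would locate the pair of roots at $\omega = e^{\pm i\theta_m}$ from the cubic factor identified above, and pin down the direction and magnitude of the jumps by evaluating $\sigma_\omega(P_m)$ at one convenient test point (for example $\omega = -1$, where $B_{-1}=2(V_m+V_m^T)$ reduces to the classical symmetric Seifert form) together with the boundary observation $\sigma_\omega(P_m)\to 0$ as $\omega \to 1$.

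Finally, the integral $\rho_0(P_m) = \int_{S^1}\sigma_\omega(P_m)\,d\omega$ reduces to weighted arc-length bookkeeping. With the expected picture that $\sigma_\omega(P_m)$ is $0$ on the short arc around $\omega=1$ bounded by $e^{\pm i\theta_m}$ and takes a constant value of absolute value $2$ on the complementary arc of length $2\pi - 2\theta_m$, one obtains $|\rho_0(P_m)| = 2(2\pi-2\theta_m)/(2\pi) = 2 - 2\theta_m/\pi$, matching the claimed closed form up to a sign fixed by orientation convention. The main obstacle is the first step: turning the surgery description of $P_m$ into a concrete Seifert matrix that exhibits the cubic dependence on $m$; the subsequent signature and integration steps are then routine eigenvalue bookkeeping on a small Hermitian form.
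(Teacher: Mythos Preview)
Your outline is a sound strategy in principle, and your guess about the Alexander polynomial is spot on: one does indeed find $\Delta_m(t)\doteq m\,(2-t-t^{-1})^3-1$, which on the unit circle vanishes exactly at $e^{\pm i\theta_m}$ with $\cos\theta_m=(2\sqrt[3]{m}-1)/(2\sqrt[3]{m})$, and the signature on the long arc is $+2$. However, what you have written is a plan rather than a proof; the step you flag as ``the main obstacle'' (producing an explicit Seifert matrix from the surgery diagram) is not carried out, and the rest of the argument is conditional on that.

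The paper sidesteps precisely this obstacle by taking a different route to the Hermitian form. Rather than resolving the Kirby diagram into an honest knot projection and reading off a Seifert matrix, it keeps the surgery description: after the Hopf cancellations of Proposition~\ref{grope} plus one more ($H_1-H_0$), the exterior $E_m$ is realized as $S^3-U$ surgered along a two-component link $\{H_2,H_9\}$. Rolfsen's method (\cite[\S7.C]{dR76}) then produces a $2\times 2$ presentation matrix $\lambda$ for the Alexander module by computing linking numbers of lifts of $H_2$ and $H_9$ in the infinite cyclic cover of the unknot $U$. Concretely one finds
\[
\lambda=\begin{pmatrix} m(2-t-t^{-1}) & 1\\ 1 & (2-t-t^{-1})^2\end{pmatrix},
\]
whose determinant is $m(2-t-t^{-1})^3-1$ and whose signature at $|t|=1$ computes the Levine--Tristram signature. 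From here the endgame is the same as yours: locate the two unit-circle roots, check $\sigma(\lambda)|_{t=-1}=+2$, and integrate the step function to get $\rho_0(P_m)=2-2\theta_m/\pi$. The advantage of the paper's approach is that it never leaves the surgery picture, so the matrix is $2\times 2$ from the start and the $m$-dependence is visible in a single entry; your Seifert-surface route would eventually arrive at an $S$-equivalent form, but extracting it from the full knot diagram is the hard part you have not done.
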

\begin{proof}
	Let $E_{m}$ denote the exterior of $P_{m}$.  Recall that $P_{m}$ is defined by surgery on the link in Figures~\ref{Pm} and~\ref{firstgropepicture}.  Depicted in Figure~\ref{intermediate} is that link, isotoped for convenience.
	
	\begin{figure}[ht!]
		\begin{center}
			\begin{picture}(281, 186)(0,0)
					\put(83, 62){$\cdots$}
				     \put(258, 75){\tiny $H_{0}$}
				     \put(258, 142){\tiny $H_{1}$}
				     \put(174, 35){\tiny $H_{2}$}
				     \put(220, 49){\tiny $H_{3}$}
				     \put(224, 85){\tiny $H_{4}$}
				     \put(194, 122){\tiny $H_{5}$}
				     \put(228, 122){\tiny $H_{6}$}
				     \put(192, 142){\tiny $H_{7}$}
				     \put(226, 142){\tiny $H_{8}$}
				     \put(130, 52){\tiny $H_{9}$}
				     \put(97, 122){\tiny $H_{10}$}
				     \put(159, 122){\tiny $H_{11}$}
				     \put(94, 142){\tiny $H_{12}$}
				     \put(157, 142){\tiny $H_{13}$}
				     \put(4, 122){\tiny $H_{6+4m}$}
				     \put(2, 142){\tiny $H_{8+4m}$}
				     \put(50, 171){\tiny $H_{9+4m}$}
				     \put(54, 169){\vector(2,-3){7}}
				     \put(80, 170){\tiny $H_{7+4m}$}
				     \put(88, 166){\vector(-1,-4){9}}
				\put(20, 45){$\underbrace{\hspace{4.7cm}}$}
				\put(84, 15){$m$}
				\put(240, 180){$U$}
				\includegraphics{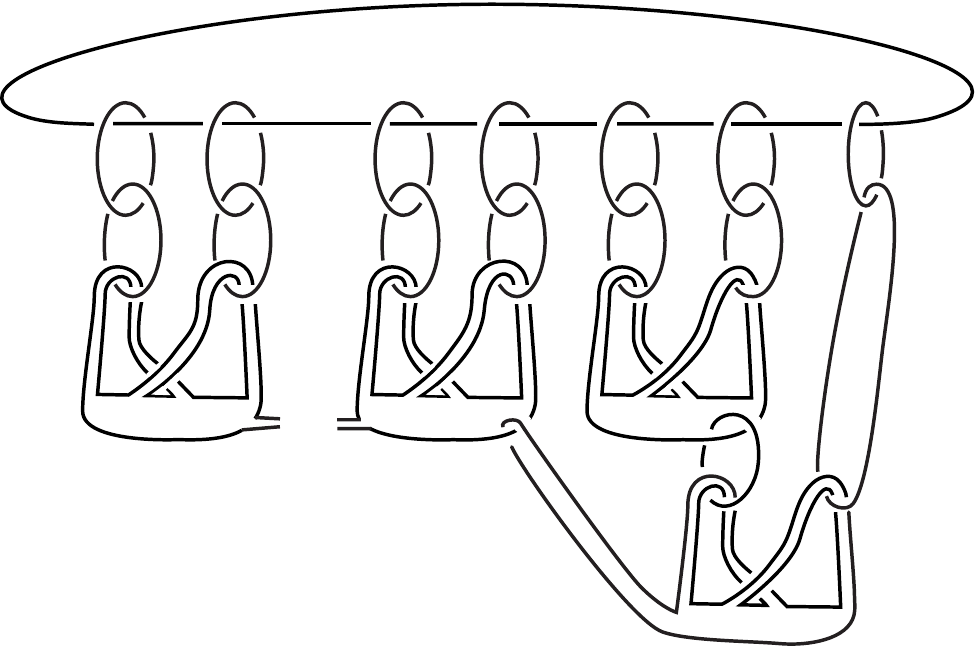}
			\end{picture}
			\caption{The surgery link defining $P_{m}$}
			\label{intermediate}
		\end{center}
	\end{figure}
	
	After performing the handle cancellations
	$$H_{1}-H_{0},\ H_{7}-H_{5},\ H_{8}-H_{6},\ H_{12}-H_{10},\ H_{13}-H_{11},\ H_{8+4m}-H_{6+4m},\ H_{9+4m}-H_{7+4m},$$ we arrive at the link in Figure~\ref{alexmodulecalc1}.  Thus, we can describe $E_{m}$ as the image of $S^{3}-U$ after $0$-surgery on the two component link $\{H_{9},H_{2}\}$ as in Figure~\ref{alexmodulecalc1}.  This link is the result after performing $0$-surgery on all components of the link in Figure~\ref{Pm} except for the bottom-most two.  As discussed in the proof of Proposition~\ref{grope}, the link $\{H_{9},H_{2}\}$ is a link in $S^{3}$.
	\begin{figure}[ht!]
		\begin{center}
			\begin{picture}(280, 150)(0,0)
				\put(85, 25){\small$\cdots$}
				\put(20, 10){$\underbrace{\hspace{4.7cm}}$}
				\put(83, -4){$m$}
				\put(280, 133){$U$}
				\put(262, 100){\small $H_{2}$}
				\put(12, 80){\small $H_{9}$}
				\includegraphics{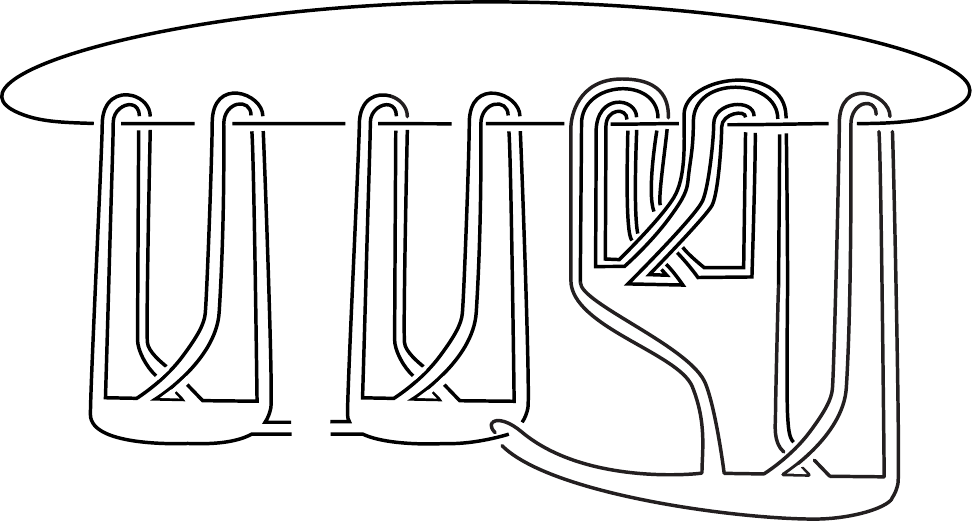}
			\end{picture}
			\caption{Zero surgery on the link $\{H_{2},H_{9}\}$ yields $E_{m}$}
			\label{alexmodulecalc1}
		\end{center}
	\end{figure}
	
	As in~\cite[\S 7.C]{dR76}, we can compute the Alexander module of $E_{m}$ using these surgery instructions.  Let $\a$ denote a lift of $H_{9}$ in the infinite cyclic cover of $U$, and let $\b$ denote the lift of $H_{2}$ that links $\a$ once.  Let $\tau$ denote a generator of the group of covering translations.  Recall that the matrix $$\lambda=\left(\begin{array}{cc}
	a+\sum_{i\in\Z-\{0\}} \lk(\a,\tau^{i}\a)\,t^{i}&\sum_{i\in\Z} \lk(\a,\tau^{i}\b)\,t^{i}\\
	\sum_{i\in\Z} \lk(\b,\tau^{i}\a)\,t^{i}&b+\sum_{i\in\Z-\{0\}} \lk(\b,\tau^{i}\b)\,t^{i}
	\end{array}\right)$$
where $\displaystyle a+\sum_{i\in\Z-\{0\}} \lk(\a,\tau^{i}\a)$ equals the framing of $H_{9}$ and $\displaystyle b+\sum_{i\in\Z-\{0\}} \lk(\b,\tau^{i}\b)$ equals the framing of $H_{2}$, is a presentation matrix for the Alexander module of $E_{m}$.
	
	Displayed in Figure~\ref{alexmodulecalc2} is the link $\{U,H_{9},H_{2}\}$ after an isotopy.  Figure~\ref{alexmodulecalc3} depicts the infinite cyclic cover of $E_{m}$.
	\begin{figure}[ht!]
		\begin{center}
			\begin{picture}(350, 150)(0,0)
				\put(152, 25){\small$\cdots$}
				\put(20, 10){$\underbrace{\hspace{7cm}}$}
				\put(115, -4){$m$}
				\put(336, 133){$U$}
				\put(330, 100){\small $H_{2}$}
				\put(12, 80){\small $H_{9}$}
				\includegraphics{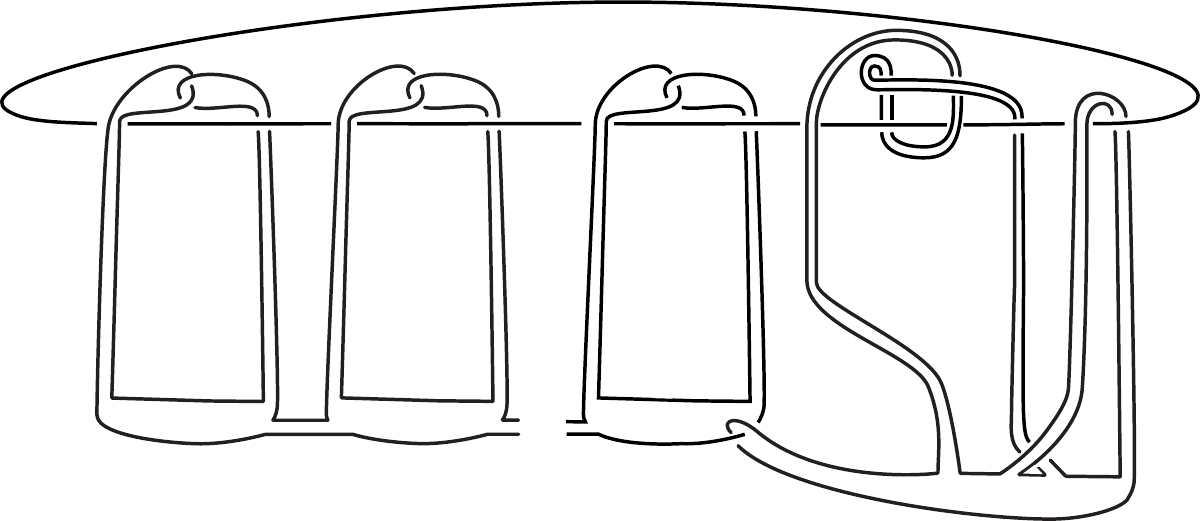}
			\end{picture}
			\caption{An isotopic image of the link from Figure~\ref{alexmodulecalc1}}
			\label{alexmodulecalc2}
		\end{center}
	\end{figure}
	
	\begin{figure}[ht!]
		\begin{center}
			\begin{picture}(254, 341)(0,0)
					\put(73, 29){\small$\cdots$}
					\put(73, 144){\small$\cdots$}
					\put(73, 259){\small$\cdots$}
					\put(260, 215){\small $\b$}
					\put(257, 195){$0$}
					\put(260, 100){\small $\tau\b$}
					\put(257, 80){$0$}
					\put(260, 15){\small $\tau^{2}\b$}
					\put(258, -0){$0$}
					\put(-18, 55){\small $\tau^{2}\a$}
					\put(-6, 35){$0$}
				 	\put(-13, 170){\small $\tau\a$}
					\put(-6, 150){$0$}
					\put(-8, 285){\small $\a$}
					\put(-6, 265){$0$}
					\put(280, 218){\vector(0,-1){115}}
					\put(275, 215.5){---}
					\put(290, 160){$\tau$}
					\includegraphics{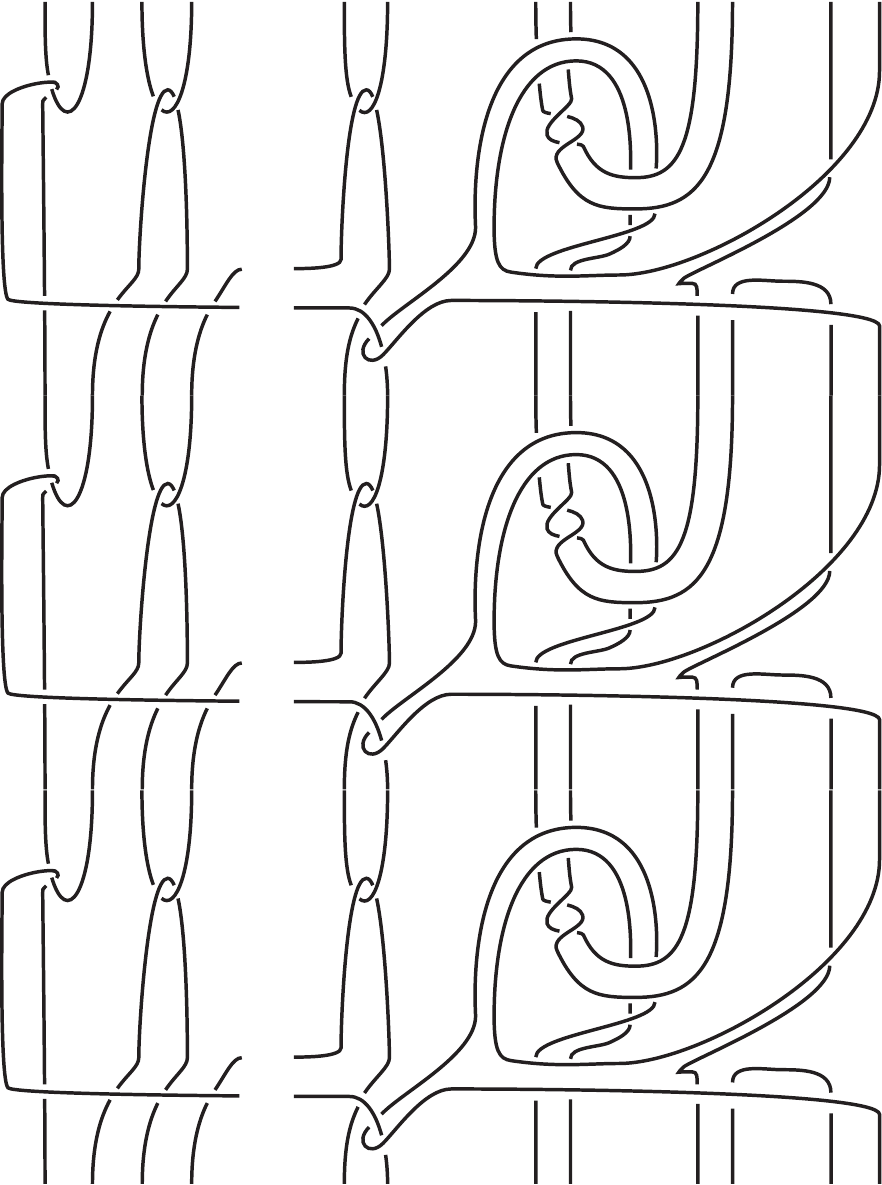}
			\end{picture}
			\caption{The infinite cyclic cover of $E_{m}$}
			\label{alexmodulecalc3}
		\end{center}
	\end{figure}
	
	From Figure~\ref{alexmodulecalc3}, one computes the Alexander matrix to be $$\lambda=\left(\begin{array}{cc}
	m\left(2-t-t^{-1}\right)&1\\
	1&t^{2}+t^{-2}-4\left(t+t^{-1}\right)+6
	\end{array}\right)$$
	
	The Alexander polynomial of $P_{m}$ is $$\Delta_{m}(t)=\det\lambda=m\left(\left(t^{3}+t^{-3}\right)-6\left(t^{2}+t^{-2}\right)+15\left(t+t^{-1}\right)-\frac{20m-1}{m}\right)$$
	
	Recall that $\rho_{0}$ of a knot is equal to the integral of the Levine-Tristram signature function over the unit circle (normalized to have length one), and that this function is constant away from the roots of the Alexander polynomial.  We claim that the polynomial $\Delta_{m}(t)$ has exactly two roots on the unit circle, namely $e^{\pm i\theta_{m}}$, where $\displaystyle\cos\left(\theta_{m}\right)=\frac{2\sqrt[3]{m}-1}{2\sqrt[3]{m}}$ and $0<\theta_{m}<\pi$.  After proving this claim, we can conclude the signature of $\lambda$ is $+2$ when substituting $t=-1$, thus $\displaystyle\rho_{0}\left(P_{m}\right)=2-2\,\frac{\theta_{m}}{\pi}$.
	
	It is easy to verify that $e^{\pm i\theta_{m}}$ are roots of $\Delta_{m}(t)$ and that $\Delta_{m}(t)$ factors as 

	\begin{equation}\label{alexfactor}
	\Delta_{m}(t)\ =\ m\left(t-\frac{2\sqrt[3]{m}-1}{\sqrt[3]{m}}+t^{-1}\right)\left(t^{2}+t^{-2}-\left(4+\frac{1}{\sqrt[3]{m}}\right)\left(t+t^{-1}\right)+6+\frac{2}{\sqrt[3]{m}}+\frac{1}{\sqrt[3]{m^{2}}}\right)
	\end{equation}

	Assume that $e^{i\psi}$, where $\psi\in\R$, is a root of the second factor in Equation~(\ref{alexfactor}).  Then $\psi$ must satisfy the equation $$4(\cos\psi)^{2}-\left(8+\frac{2}{\sqrt[3]{m}}\right)\cos\psi+4+\frac{1}{\sqrt[3]{m^{2}}}+\frac{2}{\sqrt[3]{m}}=0$$  The quadratic formula yields $\displaystyle\cos\psi=\frac{4\sqrt[3]{m}+1\pm i\sqrt{3}}{4\sqrt[3]{m}}$.  This implies that $\psi\not\in\R$, and we conclude that $\Delta_{m}(t)$ has no other roots on the unit circle.
\end{proof}

\begin{lem}\label{indepsigs}
	There is a subsequence of $P_{m_{j}}$ of the knots $P_{m}$ whose  set of $0$-th-order signatures is linearly independent over $\Q$.
\end{lem}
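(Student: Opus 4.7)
The plan is to build the subsequence inductively using Lemma~\ref{sigcalc}, which gives $\rho_{0}(P_{m}) = 2 - (2/\pi)\theta_{m}$ with $\cos(\theta_{m}) = 1 - \frac{1}{2\sqrt[3]{m}}$ and $\theta_{m} \in (0,\pi)$. First I would record the basic monotonicity: since $1 - \frac{1}{2\sqrt[3]{m}}$ is strictly increasing in $m$ with limit $1$, and $\cos$ is injective on $(0,\pi)$, the sequence $\theta_{m}$ is strictly decreasing with limit $0$, so the real numbers $\rho_{0}(P_{m})$ are pairwise distinct, strictly increasing, and converge to $2$ from below. In particular the set $\{\rho_{0}(P_{m}):m\ge 1\}$ is infinite.

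Now assume inductively that indices $m_{1}<m_{2}<\cdots<m_{j}$ have been selected so that $\rho_{0}(P_{m_{1}}),\ldots,\rho_{0}(P_{m_{j}})$ are linearly independent over $\Q$, and let $V_{j}\subset\R$ denote their $\Q$-span. The inductive step is to exhibit some $m_{j+1}>m_{j}$ with $\rho_{0}(P_{m_{j+1}})\notin V_{j}$; any such choice automatically extends the linearly independent set by one element, and iterating produces the desired subsequence $\{P_{m_{j}}\}_{j\ge 1}$.

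The main obstacle is to verify that such an $m_{j+1}$ always exists --- equivalently, that the infinite set $\{\rho_{0}(P_{m}):m>m_{j}\}$ is not entirely contained in the countable set $V_{j}$. A bare cardinality argument does not suffice, since both sets are countable. Unpacking the formula, $\rho_{0}(P_{m})\in V_{j}$ is equivalent to an equation of the form $\theta_{m} = C\pi + \sum_{i=1}^{j} c_{i}\theta_{m_{i}}$ for some rationals $C,c_{1},\ldots,c_{j}$. Clearing denominators, multiplying through by $i$, and exponentiating converts this identity into a multiplicative relation
\[\alpha_{m}^{N} \;=\; \pm\prod_{i=1}^{j} \alpha_{m_{i}}^{N c_{i}}\]
among the algebraic unit-modulus numbers $\alpha_{m} := e^{i\theta_{m}} = \left(1 - \frac{1}{2\sqrt[3]{m}}\right) + i\sqrt{1 - \left(1 - \frac{1}{2\sqrt[3]{m}}\right)^{2}}$.

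I would resolve this via Baker's theorem on linear forms in logarithms: once a rational exponent tuple is fixed, only finitely many $m$ can satisfy the resulting multiplicative relation, so the set of ``bad'' $m$ (those with $\rho_{0}(P_{m})\in V_{j}$) can be avoided by a suitable growth rate in the choice of $m_{j+1}$. As a more hands-on alternative, one may select $m_{j+1}$ so that $\Q(\sqrt[3]{m_{j+1}})$ is linearly disjoint from the compositum $\Q(\sqrt[3]{m_{1}},\ldots,\sqrt[3]{m_{j}})$ --- for example by taking $m_{j+1}$ to be a sufficiently large prime --- and then use degree considerations on the minimal polynomial of $\alpha_{m_{j+1}}$ to force the putative multiplicative relation to trivialize. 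Either route supplies the required $m_{j+1}$ and closes the induction.
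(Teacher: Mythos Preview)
Your reduction to a multiplicative relation $\alpha_m^N=\pm\prod_i\alpha_{m_i}^{Nc_i}$ among the algebraic numbers $\alpha_m=e^{i\theta_m}$ is correct, but neither of the two proposed routes for the inductive step is actually carried out, and the first is misapplied. Baker's theorem upgrades an \emph{assumed} $\Q$-linear independence of logarithms of algebraic numbers to linear independence over $\overline{\Q}$; it does not establish $\Q$-independence, which is precisely what you need here. Your observation that a \emph{fixed} exponent tuple determines $\alpha_m$ (hence $m$) up to finitely many possibilities is valid but does not yield the conclusion: there are countably many tuples, and a countable union of finite sets may well exhaust the countable set $\{m>m_j\}$, so no ``growth rate'' consideration produces a good $m_{j+1}$ without further input. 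Your field-theoretic alternative is a more promising direction --- one does have $\sqrt[3]{m}=(2-\alpha_m-\alpha_m^{-1})^{-1}\in\Q(\alpha_m)$, and linear disjointness of $\Q(\sqrt[3]{m_{j+1}})$ from the compositum can certainly be arranged --- but ``degree considerations'' alone do not close the argument, because the unknown exponent $N$ is unbounded: the relation only places $\alpha_{m_{j+1}}$ in an (arbitrarily large) radical extension of the fixed field $\Q(\alpha_{m_1},\ldots,\alpha_{m_j})$. Turning this into a contradiction would require a genuine arithmetic argument (for instance, controlling ramification in such Kummer towers, or analyzing for which $N$ the Chebyshev value $T_N(2-m^{-1/3})$ can lie in $\Q$), and that argument is not supplied.

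The paper proceeds entirely differently and sidesteps these difficulties. It sets $m_j=\tfrac{1}{64}(p_j-1)^6$ for the increasing sequence of primes $p_j\equiv 1\pmod 4$ starting at $p_1=5$, so that $\sqrt[3]{m_j}=\tfrac14(p_j-1)^2$ is rational and hence $\cos\theta_{m_j}=1-2/(p_j-1)^2\in\Q$. It then invokes \cite[Proposition~2.6]{COT04}, which establishes the $\Q$-linear independence of exactly this family of angles. Thus, rather than an abstract inductive construction, the paper makes one explicit choice tailored to cite a known independence result.
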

\begin{proof}
	By Lemma~\ref{sigcalc}, it is sufficient to find a set $\{\theta_{m_{j}}\}_{j=1}^{\infty}$ that is linearly independent over $\Q$.  Let $5=p_{1}< p_{2}<p_{3}<\cdots$ be a sequence of prime integers satisfying $p_{j}\equiv 1\pmod4$ for all $j$, and set $m_{j}=\frac{1}{64}\left(p_{j}-1\right)^{6}\in\Z$.  It was shown in~\cite[Proposition 2.6]{COT04} that $\{\theta_{m_{j}}\}_{j=1}^{\infty}$ is linearly independent over $\Q$.
\end{proof}

	\bibliographystyle{amsalpha}
	\bibliography{GropeFiltration}

\providecommand{\bysame}{\leavevmode\hbox to3em{\hrulefill}\thinspace}
\providecommand{\MR}{\relax\ifhmode\unskip\space\fi MR }
\providecommand{\MRhref}[2]{%
  \href{http://www.ams.org/mathscinet-getitem?mr=#1}{#2}
}
\providecommand{\href}[2]{#2}
\begin{thebibliography}{COT04}

\bibitem[CH08]{CH08}
Tim Cochran and Shelly Harvey, \emph{Homology and derived series of groups
  {II}: {D}wyer's theorem}, Geometry and Topology \textbf{12} (2008),
  199---232.

\bibitem[CHL07]{CHL07b}
Tim Cochran, Shelly Harvey, and Constance Leidy, \emph{Knot concordance and
  higher-order {B}lanchfield duality}, Available at
  \url{http://arxiv.org/abs/0710.3082}, 2007.

\bibitem[COT03]{COT03}
Tim Cochran, Kent Orr, and Peter Teichner, \emph{Knot concordance, {W}hitney
  towers and ${L}^2$-signatures}, Annals of Mathematics \textbf{157} (2003),
  433--519.

\bibitem[COT04]{COT04}
\bysame, \emph{Structure in the classical knot concordance group}, Commentarii
  Mathematici Helvetici \textbf{79} (2004), 105--123.

\bibitem[CT07]{CT07}
Tim Cochran and Peter Teichner, \emph{Knot concordance and von {N}eumann
  $\rho$-invariants}, Duke Mathematical Journal \textbf{137} (2007), no.~2,
  337--379.

\bibitem[FT95]{FT95}
Michael Freedman and Peter Teichner, \emph{4-manifold topology {I}:
  {S}ubexponential groups}, Inventiones Mathematicae \textbf{122} (1995),
  509--529.

\bibitem[GS99]{GS99}
Robert Gompf and Andr\'{a}s Stipsicz, \emph{$4$-manifolds and {K}irby
  calculus}, Graduate Studies in Mathematics, vol.~20, American Mathematical
  Society, 1999.

\bibitem[Har08]{sH08}
Shelly Harvey, \emph{Homology cobordism invariants and the
  {C}ochran-{O}rr-{T}eichner filtration of the link concordance group},
  Geometry and Topology \textbf{12} (2008), 387---430.

\bibitem[Jia81]{bJ81}
Boju Jiang, \emph{A simle proof that the concordance group of algebraically
  slice knots is infinitely generated}, Proceedings of the American
  Mathematical Society \textbf{83} (1981), no.~1, 189--192.

\bibitem[Kir78]{rK78}
Robion Kirby, \emph{A calculus for framed links in ${S}^3$}, Inventiones
  Mathematicae \textbf{45} (1978), 35--56.

\bibitem[Rol76]{dR76}
Dale Rolfsen, \emph{Knots and links}, Publish or Perish, Berkeley, CA, 1976.

\end{thebibliography}

\end{document}